\newtheorem{assumption}{Assumption}
\newtheorem{definition}{Definition}
\newtheorem{lem}{Lemma}
\newtheorem{rem}{Remark}
\newtheorem{theorem}{Theorem}
\newcommand{\E}{\mathrm{E}}
\newcommand{\R}{\mathrm{Re}}
\def\mk#1{{\color{black}#1}}
\def\hb{\boldsymbol{h}}
\def\R{\mathbb{R}}
\def\Z{\mathbb{Z}}
\def\xb{\mathbf{x}}
\def\cb{\mathbf{c}}
\def\zbx{\mathbf{X}}
\def\ab{\mathbf{a}}
\def\yb{\mathbf{y}}
\def\bmu{\boldsymbol{\mu}}
\def\be{\boldsymbol{\eta}}
\def\Mb{\boldsymbol{M}}
\def\Qb{\boldsymbol{Q}}
\def\bla{\boldsymbol{\lambda}}
\def\bsigma{\boldsymbol{\sigma}}
\def\Gb{\boldsymbol{G}}
\def\EQ{\EuScript Q}
\def\Rb{\boldsymbol{R}}
\def\Ab{\boldsymbol{A}}
\def\bemax{\beta_{\mbox{\scriptsize max}}}
\def\bemin{\beta_{\mbox{\scriptsize min}}}
\def\sigmax{\sigma_{\mbox{\scriptsize max}}}
\def\gamax{\gamma_{\mbox{\scriptsize max}}}
\def\cb{\mathbf{c}}
\def\gb{\mathbf{g}}
\def\R{\mathbb{R}}
\def\Z{\mathbb{Z}}
\def\xb{\mathbf{x}}
\def\zbx{\mathbf{X}}
\def\ab{\boldsymbol{a}}
\def\mb{\boldsymbol{m}}
\def\by{\boldsymbol{y}}
\def\cb{\mathbf{c}}
\def\bx{\boldsymbol{x}}
\def\Proj{\mbox{Proj}}
\def\Lag{\EuScript L}
\def\r{~}
\begin{document}
%
%\title{Learning Generalized Nash Equilibria in a Class of Convex Games}
%
%\author{Tatiana~Tatarenko and Maryam Kamgarpour, IEEE Member% <-this % stops a space
%\thanks{T. Tatarenko (\url{tatiana.tatarenko@rmr.tu-darmstadt.de}) is with the Control Methods and Robotics Lab Technical University Darmstadt, Darmstadt, Germany 64283, M. Kamgarpour (\url{maryamk@ethz.ch}) is with the Automatic Control Laboratory, ETH Z\"urich, Switzerland.}% <-this % stops a space
%\thanks{This work was gratefully supported by M. Kamgarpour's ERC Starting Grant CONENE.}% <-this % stops a space
%}

\title{\LARGE
Learning Generalized Nash Equilibria in a Class of Convex Games\let\thefootnote\relax\footnotetext{This research was gratefully funded by the European Union ERC Starting Grant CONENE.}\\
}
\author{Tatiana Tatarenko\thanks{Department of Control Theory and Robotics, TU Darmstadt, Germany} \and Maryam Kamgarpour\thanks{Automatic Control Laboratory, ETH Z\"urich, Switzerland}\footnotemark[1]% <-this % stops a space
}

\maketitle

\begin{abstract}
We consider multi-agent decision making where each agent optimizes its convex cost function subject to individual and coupling constraints. The constraint sets are compact convex subsets of a Euclidean space. To learn Nash equilibria, we propose a novel distributed payoff-based algorithm, where each agent uses  information only about its cost value and the constraint value with its associated dual multiplier. We prove convergence of this algorithm to a Nash equilibrium, \mk{under the assumption that the game admits a strictly convex potential function}. \mk{In the absence of coupling constraints, we prove convergence to Nash equilibria under significantly weaker assumptions, not requiring a potential function. Namely, strict monotonicity of the game mapping is sufficient for convergence.} We also derive the convergence rate of the algorithm for strongly monotone game maps.

\end{abstract}
%

%%%%%%%%%%%%%%%%%%%%%%%%%%%%%%%%%%%%%%%%%%%%
\section{Introduction}

% background
Decision making in multi-agent systems arises in engineering applications ranging from electricity markets to telecommunication and transportation networks \cite{Vehicle, BasharSG, Scutaricdma}. Game theory provides a powerful framework for analyzing and optimizing decisions in multi-agent systems. The  notion of an equilibrium in a game  characterizes stable solutions to multi-agent decision making problems. In this work, we design a distributed learning algorithm to converge to Nash equilibria for a class of non-cooperative games modeled by convex objective functions and coupling constraints.

% past work in computing Nash equilibria using distributed optimization
There is a large body of work on computation of Nash equilibria. The approaches differ mainly by the particular structure of agents' cost functions as well as the information available to each agent. In a so-called \textit{potential game},  a central optimization problem can be formulated whose minimizers coincide with a subset of the Nash equilibria of the game. One can then leverage  distributed optimization algorithms to compute the minima of the potential function \cite{MardDes, gossipLacra}, despite agents' limited information of others' cost functions or action sets. Distributed algorithms have also been designed for the class of  \textit{aggregative games} \cite{Jensen, paccagnan2016aggregative}. In general, for implementation of the aforementioned distributed algorithms each agent needs to know the structure of its cost function or its derivative. Furthermore, agents may need to communicate with each other or with a central entity, even if their strategy spaces are  
decoupled.

% past work in computing Nash equilibria using learning based approaches
In contrast to deterministic distributed optimization approaches, learning approaches start with the assumption that the each agent's objective function or its derivative may not be known  to the agent itself nor to the other agents. They attempt to compute Nash equilibria by sampling agents' actions from a set of probability distributions. These probability distributions are updated based on the information available in the system. In particular, most of the past work has focused on algorithms that require the ability of each agent to evaluate  its cost function at any feasible point, given fixed actions of all other agents, and convergence is established for the subclass of potential games  \cite{MardenLLL, Leslie, Tat_ACC14, Tat_ECC16}.

% motivation for payoff based
In many practical situations the agents do not know functional form of their objectives and can only access the values of their objective functions at a played action. Such situations arise, for example,  in electricity markets (unknown price functions or constraints) \cite{windfarm,elmark}, network routing (unknown traffic demands/constraints) \cite{netrout, netrout1}, and sensor coverage problems (unknown density function on the mission space) \cite{COVEr}. In such cases, the information structure is referred to as \textit{payoff-based}, that is, each agent can only observe its obtained payoffs and be aware of its local actions. A payoff-based learning algorithm in potential games is proposed in \cite{MardRev} with the guarantee of stochastic stability of potential function minimizers. However, to implement this payoff-based algorithm agents need to have some memory. Other algorithms requiring only payoff-based information and memory are proposed in \cite{Goto_PIPIP} and \cite{COVEr}. These learning procedures 
assume a potential game and guarantee convergence to a distribution over potential function
minimizers in total variation. In \cite{Shamma2015} the idea of dynamic feedback is utilized for matrix games and an extension of fictitious play is proposed that considers empirical frequencies and their derivatives. The convergence to Nash equilibrium in this setting is established.  Learning based approaches are also proposed in \cite{pradelski2012learning} for non-potential games, where  stochastic convergence to the Nash equilibrium maximizing social welfare is guaranteed.

% memoryless payoff based
The above payoff-based procedures are applicable to games with finite action spaces. For games with uncountable action spaces, a payoff-based approach was developed based on extremum seeking \cite{frihauf2012nash}. The extremum seeking approach designs a dynamic update law for the actions based on sinusoidally perturbed measured payoffs. If the amplitude and frequency of this sinusoid are chosen properly, locally asymptotically stable equilibria of this dynamical system will correspond to Nash equilibria of the game.  This approach was extended to account for stochastic noise affecting measurements of the cost functions  \cite{stankovic2012distributed}. Given strongly convex cost functions almost sure convergence to a Nash equilibrium was proven. An alternative payoff-based approach, inspired by the \textit{logit} dynamics in finite action games \cite{blume1993statistical}, was proposed in \cite{Tat_cdc16} in a potential game setting. This approach considered sampling agents' actions from a Gaussian 
distribution. The result was generalized to arbitrary games (not necessarily potential) with uncoupled action sets in \cite{Tat_ifac2017}.

% coupling constraints
Despite considerable progress in learning algorithms for games, the work on payoff-based learning has not considered convex cost functions and coupling constraints on agents' actions. In several realistic scenarios in which players share resources each player's feasible strategy space depends on the other players' actions. For example, in an electricity market, there are  coupling constraints due to the underlying physical electricity network. Similar constraints exist in a transportation or telecommunication network and general deregulated economy problems \cite{scutari2012monotone}. The Nash equilibria in a game with coupling constraints are referred to as \emph{generalized Nash equilibria}. Ensuring uniqueness of these equilibria and computing them is a lively research topic \cite{facchinei2007generalized}.

% variational equilibria
We focus on learning equilibria in a subset of generalized Nash equilibrium problems in which the coupling constraint is shared among the agents and is jointly convex (convex in all actions). In this setting, one can formulate a variational equilibrium problem  to characterize a subset of the generalized Nash equilibria, referred to as variational equilibria \cite{facchinei2007generalized}.  In addition to computational advantages, variational equilibria present  few desired practical properties. For example, since in this equilibrium, the dual multiplier associated to the joint constraint is equal across all players, there is a well-defined cost associated to constraint violation. Note that variational equilibria are also a subset of the normalized equilibria, introduced in the seminal work \cite{Rosen1965}, where the normalizing coefficients considered in \cite{Rosen1965} would be constant across all players.
The authors in \cite{kulkarni2009new} further derived theoretical  connections between  variational equilibria and generalized Nash equilibria and showed that in the interior of the shared constraint sets, the two equilibria concepts are equivalent.

% On computing variational equilibria
Recent research has focused on distributed algorithms for computing generalized Nash equilibria. Authors in \cite{Shanbhag2011} consider variational equilibria in monotone games and propose a primal-dual distributed algorithm. Similarly, \cite{dario2016cdc,gentile2017nash} addresses decentralized computation of variational equilibria for aggregative games. In \cite{salehisadaghiani2016distributed} a distributed primal-dual algorithm for computing generalized Nash equilibria is proposed for a network game. In the network game setting, it is assumed that there exists a communication graph through which each player can share its strategy information with its neighbors. Hence, some coordination between agents is needed. In all the above  work, each agent needs to know the functional form of their cost function or its gradient. The work in  \cite{ZhuFrazzoli} suppresses this requirement and develops a primal-dual algorithm for learning generalized Nash equilibria. Nevertheless, players need to 
exchange information with their neighbors according to the network graph. As such,  they can  estimate the gradients of their  cost functions online using  neighborhood information.

The approach to estimate the gradient of a cost function online is  well-studied in the stochastic optimization literature \cite{nesterov2011random}. In the game setting however, this approach necessitates some coordination between agents. This is because for a given player to estimate the gradient of its  cost function, it needs to evaluate this cost function at at least two points of its strategy space while other agents who influence the player's cost function should not change their actions (otherwise,  the player cannot attribute the decrease/increase of its cost to its own actions and hence, the gradient cannot be estimated). Our goal is to develop a payoff-based algorithm that bypasses the need for coordination or information exchange during each step of the algorithm. Naturally however, similar to all past algorithms, the agents must agree to implement the algorithm.

% contribution
Our contributions are as follows. First, we develop a payoff-based approach for computing  Nash equilibria in a class of convex games with jointly convex coupling constraints. Second, we prove almost sure convergence of the algorithm to variational Nash equilibria, \mk{under the existence of a strictly convex potential function}. \mk{Third, in the absence of coupling constraints, we prove  almost sure convergence to variational Nash equilibria, relaxing the requirement of existence of a potential function}. Fourth, for this latter setup, we quantify the convergence rate of the payoff-based algorithm if the game map is strongly monotone. While our setup is similar to \cite{Rosen1965, Shanbhag2011, ZhuFrazzoli,dario2016cdc}, in contrast to the above work we do not require knowledge of the cost functions, constraints or their gradients \cite{dario2016cdc,Shanbhag2011}. Also,  we require neither information exchange between players \cite{salehisadaghiani2016distributed,ZhuFrazzoli}, nor knowledge of a norm bound on the dual multipliers of the coupling constraints \cite{ZhuFrazzoli}. 

Our approach is detailed as follows. We extend the  game to define a player corresponding to the dual multiplier of the coupling constraints, similar to \cite{Shanbhag2011, ZhuFrazzoli,dario2016cdc,gentile2017nash}. We then develop a novel  sampling based approach, in which the probability distributions from which agents sample their actions are Gaussian, inspired by the literature on learning automata \cite{Thatha}. The mean of the distribution is updated iteratively by each agent based only on its own current payoff and local constraint set.  The dual player, on the other hand, updates its action deterministically by measuring constraint violation at each time step. Notice that similar to \cite{dario2016cdc, ZhuFrazzoli} the dual player is a fictitious player. It can refer to a central coordinator who measures the constraint violation at each step. Alternatively, if each agent can locally measure the
constraint violation, it can update its dual variable. Furthermore, similar to primal-dual algorithms in \cite{dario2016cdc, Shanbhag2011, ZhuFrazzoli} constraints are satisfied upon convergence of the algorithm.  To prove convergence of our algorithm we leverage results on Robbins-Monro stochastic approximation \cite{Borkar,NH}. We quantify the convergence rate based on rate estimates in stochastic projection algorithms \cite{stochprogr}.

This paper is organized as follows. In Section \ref{sec:problem},  we set up the game under consideration. In Section \ref{sec:analysis}, we propose our payoff-based approach and present its convergence result. Section \ref{sec:proof} develops the proof of the main result using supporting theorems on stochastic random variables. In Section \ref{sec:rate}, we \mk{relax the coupling constraint and consequently, the requirements for convergence of the proposed algorithm.} Furthermore, we provide a convergence rate for this latter case. A  case study is provided in Section \ref{sec:simulations} based on a game arising in a classical Cournot economic model. In Section \ref{sec:conclusion}, we summarize the result and discuss future work.

%%%%%%%%%%%%%%%%%%%%%%%%%%%%%%%%%%%%%%%%%%%%
\textbf{Notations and basic definitions.} The set $\{1,\ldots,N\}$ is denoted by $[N]$. Boldface is used to distinguish between vectors in a multi-dimensional space and scalars.
Given $N$ vectors $\bx^i\in\R^d$, $i\in[N]$, $[\bx^i]_{i=1}^{N}:=[{\bx^1}^{\top}, \ldots, {\bx^N}^{\top}]^{\top} \in \R^{Nd}$; $\bx^{-i}:=[{\bx^1}, \ldots, {\bx^{i-1}},{\bx^{i+1}}, \ldots, {\bx^N}] \in \R^{(N-1)d}$.
$\R^d_{+}$ and $\Z_{+}$ denote respectively, vectors from $\R^d$ with non-negative coordinates and non-negative whole numbers.  The standard inner product on $\R^d$ is denoted by $(\cdot,\cdot)$: $\R^d \times \R^d \to \R$, with associated norm $\|\bx\|:=\sqrt{(\bx, \bx)}$. We let $\R^d_{\le K}=\{\xb\in\R^d: \|\xb\|\le K\}$. $I_d$ represents the $d$-dimensional identity matrix and $\mathds{1}_N$ represents the $N$-dimensional vector of unit entries. Given some matrix $A\in\R^{d\times d}$, $A\succeq(\succ)0$, if and only if $\bx^{\top}A\bx\ge(>)0$ for all $\bx\ne 0$.

 Given a function $\gb(\bx, \by):\R^{d_1}\times\R^{d_2}\to\R$, we define the
 mapping $\nabla_{\bx}\gb(\bx, \by): \R^{d_1}\times\R^{d_2}\to\R^{d_1}$ component-wise as $[\nabla_{\bx}\gb(\bx, \by)]_{i}:=\frac{\partial \gb(\bx, \by)}{\partial x^i}$.
 We use the big-$O$ notation, that is, the function $f(x): \R\to\R$ is $O(\gb(x))$ as $x\to a$, $f(x)$ = $O(g(x))$ as $x\to a$, if $\lim_{x\to a}\frac{|f(x)|}{|g(x)|}\le K$ for some positive constant $K$.
 We say that a function $f(\bx)$ grows not faster than a function $g(\bx)$ as $\bx\to\infty$, if there exists a positive constant $Q$ such that $f(\bx)\le g(\bx)$ for any $\bx$ with $\|\bx\|\ge Q$.

\begin{definition}\label{def:pmm}
 The mapping $\Mb:\R^d\to\R^d$ is called \emph{pseudo-monotone} over $X\subseteq\R^d$, if $(\Mb(\by),\bx-\by)\ge 0$ implies $(\Mb(\bx),\bx-\by)\ge 0$ for every $\bx,\by\in X$.
 \end{definition}

\begin{definition}\label{def:strongmon}
 The mapping $\Mb:\R^d\to\R^d$ is called \emph{strongly monotone} over $X\subseteq\R^d$ with constant $\kappa>0$, if $(\Mb(\bx)-\Mb(\by),\bx-\by)\ge \kappa\|\bx-\by\|^2$ for any $\bx,\by\in X$. It is \emph{strictly monotone} if $(\Mb(\bx)-\Mb(\by),\bx-\by)> 0$ for any $\bx,\by\in X$.
 \end{definition}

%  \begin{definition}\label{def:Lip}
% The mapping $\Mb(\bx_1, \ldots, \bx_{k+1}):\R^{dk + n}\to\R^l$, where $\bx_i\in\R^d$, $i\in[k]$, is called \emph{Lipschitz on $X\subseteq \R^{dk}$ with respect to coordinates $\bx_1,\ldots, \bx_{k}$} with a function $L(\bx_{k+1})>0$ defined on $\R^n$, if $\|\Mb(\bx,\bx_{k+1})-\Mb(\by,\bx_{k+1})\|\le L(\bx_{k+1})\|\bx-\by\|$ for every $\bx=(\bx_1,\ldots,\bx_k),\by=(\by_1,\ldots,\by_k)\in X$.
% \end{definition}
% 
\begin{definition}\label{def:slater}
Let $\EuScript C$ be a convex constraint set described by  a finite set of convex inequality constraints $\EuScript C=\{\bx\in\R^d: f^i(\bx)\le 0, i\in[m]\}$. The \emph{Slater's constraint qualification} consists in existence of a  strictly feasible point $\xb^* \in \EuScript C$, $f^i(\bx^*) < 0$ for $ i \in [m]$.
%, for $i$ corresponding to inequality constraints,  and furthermore
% $\bx^*\in \mbox{\textbf{relint}}(D)$, $D=\cap_{i=1}^m\mbox{\textbf{dom}}(f^i)$. Here, $\mbox{\textbf{relint}}(D) = \{\bx\in D: \exists r > 0, \; B(\bx,r) \cap \textbf{aff}(D) \subseteq D\}$ is the relative interior of $D$, with $B(\bx,r)$ denoting the ball centered at $\bx$ with radius $r$ and $\textbf{aff}(D)$ is the affine hull of $D$  \cite{boyd2004convex}. Moreover, there exists an $x \in \mbox{\textbf{relint}}(D)$, such that absolute value of all constraints is bounded away from zero.
 \end{definition}

%%%%%%%%%%%%%%%%%%%%%%%%%%%%%%%%%%%%%%%%%%%%
\section{Problem Formulation}\label{sec:problem}
\subsection{Convex games with coupling constraints}
We consider a game $\Gamma (N, \{A_i\}, \{J_i\}, C)$ with $N$ players. We assume that the action of the $i$th player is locally constrained to $\ab^i\in A_i\subset \R^d$ and that the vector of joint actions\footnote{All results below are applicable for games with different dimensions $\{d_i\}$ of the action sets $\{A_i\}$.}, $\ab = [\ab^1 \ldots, \ab^N ]\in \Ab = A_1\times\ldots\times A_N$, has to belong to a \emph{global coupling constraint set} $C$, namely
\begin{align}\label{eq:globconstr}
 \ab\in C=\{\ab\in\Ab: \gb(\ab)\le \boldsymbol 0\},
\end{align}
where $\gb:\R^{Nd}\to\R^n$ with coordinates $g_i(\ab), i\in[n]$.
%for some  continuously differentiable function $\gb:\R^{Nd}\to\R^n$ capturing coupling inequality constraints with \emph{convex} coordinates $g_i(\ab), i\in[n]$.
Let $\EQ = \Ab\cap C$, $\EQ^i(\ab^{-i})=\{\ab^i\in A^i: \gb(\ab^i,\ab^{-i})\le 0\}$. The cost functions $J_i:\R^{Nd}\to\R$ indicate the cost $J_i(\ab)$ the agent $i$ has to pay, given any joint action $\ab\in\EQ$. Throughout this paper we assume $A_i$ to be compact for all $i\in[N]$.

A \emph{generalized Nash equilibrium} (GNE) in a game $\Gamma$ with coupled actions represents a joint action from which no player has any incentive to unilaterally deviate.
\begin{definition}\label{def:GNE}
 A point $\ab^*\in\EQ$ is called a \emph{generalized Nash equilibrium} (GNE) if for any $i\in[N]$ and $\ab^i\in \EQ^i(\ab^{*-i})$
 $$J_i(\ab^{*i},\ab^{*-i})\le J_i(\ab^{i},\ab^{*-i}).$$
 If $C=\R^{Nd}$ then $\EQ^i(\ab^{-i})=\{a^i: a^i\in A_i\}$ and any $\ab^*$ for which the inequality above holds is a \emph{Nash equilibrium (NE)}.
 \end{definition}

%We are interested in designing a \emph{payoff-based algorithm}, which converges to a generalized Nash equilibrium in $\Gamma$. The payoff-based information structure implies that  agent $i$, $i \in [N]$, does not know the functional form of $J_i$ or $\gb$ but for a played action $\ab$, it can observe their values. 
We consider convex games as follows. 
\begin{assumption}\label{assum:convex}
 The game under consideration is \emph{convex}. Namely, for all $i\in[N]$ the set $A_i$ is convex and compact, the cost function $J_i(\ab^i, \ab^{-i})$ is defined on $\R^{Nd}$, continuously differentiable in $\ab$ and convex in $\ab^i$ for  fixed $\ab^{-i}$. The coupling constraint function $\gb:\R^{Nd}\to\R^n$ is continuously differentiable and has convex coordinates $g_i(\ab), i\in[n]$.
\end{assumption}

Given differentiable cost functions, we define the game mapping and the extended game mapping.
\begin{definition}\label{def:mapping}
The mapping $\Mb:\R^{Nd}\to\R^{Nd}$, referred to as  the \emph{game mapping} of  $\Gamma(N, \{A_i\}, \{J_i\}, C)$ is defined by
 \begin{align}\label{eq:gamemapping}
 &\Mb(\ab) = \cr
 &[M_{1,1}(\ab), \ldots, M_{1,d}(\ab),\ldots, M_{N,1}(\ab), \ldots, M_{N,d}(\ab)]^{\top}, \cr
 &M_{i,k}(\ab)= \frac{\partial J_i(\ab)}{\partial a^i_k}, \; \ab\in\EQ=\Ab\cap C, i\in[N], k\in[d].
 \end{align}
\end{definition}

\begin{definition}\label{def:mapping_ext}
The mapping $\Mb^0:\R^{Nd+n}\to\R^{Nd+n}$, referred to as  the \emph{extended game mapping} of  $\Gamma(N, \{A_i\}, \{J_i\}, C)$ with coupled actions, is defined by
 \begin{align}\label{eq:gamemappingext}
 &\Mb^0(\ab, \bla)= [\Mb^0_{1}(\ab, \bla),\ldots, \Mb^0_{N}(\ab, \bla),-\gb(\ab)]^{\top}, \cr
 &\Mb^0_{i}(\ab, \bla)=[M^0_{i,1}(\ab, \bla), \ldots, M^0_{i,d}(\ab, \bla)], \quad i\in[N],\cr
 &M^0_{i,k}(\ab,\bla)= M_{i,k}(\ab) +\frac{\partial \big(\bla, \gb(\ab)\big)}{\partial a^i_k},\; \ab\in\EQ=\Ab\cap C, \cr
 &\qquad\qquad\qquad\qquad\qquad\qquad\qquad i\in[N], \quad k\in[d].
 \end{align}
\end{definition}

%\begin{rem}
% If Assuption\r\ref{assum:convex} holds, there exists a convex vector-function $\fb_i:R^d \to R^{m_i}$ such that
% $A_i = \{\ab^i \in R^d: \fb_i(\ab^i) \le \boldsymbol 0\}$.
%\end{rem}

To design an algorithm with bounded iterates, we  need the following standard assumptions \cite{dario2016cdc, Shanbhag2011, ZhuFrazzoli, facchinei2007generalized}.
\begin{assumption}\label{assum:CG_grad}
 The coordinates $\Mb_i^0(\ab, \bla):\R^{Nd+n}\to\R^{d}$ of extended mapping $\Mb^0(\ab, \bla):\R^{Nd+n}\to\R^{Nd+n}$ of a game $\Gamma(N, \{A_i\}, \{J_i\}, C)$ with coupled actions
 are \emph{Lipschitz on $\R^{Nd}$} with respect to coordinates $\ab$ with a linear function $L_i(\bla)$ . The function $\gb(\ab)$ is Lipschitz on $\R^{Nd}$. Moreover, the extended game mapping $\Mb^0(\ab, \bla)$ is \emph{pseudo-monotone on $\EQ\times\R^n_+=(\Ab\cap C)\times\R^n_+$}.
 \end{assumption}
 
  \begin{assumption}\label{assum:Slaters}
The sets $A_i$, $i\in[N]$, $\Ab$, and $\EQ$ satisfy the Slater's constraint qualification (see Definition\r\ref{def:slater}).
\end{assumption}

 \begin{assumption}\label{assum:inftybeh}
 The cost functions $J_i(\ab)$, $i\in[N]$, grow not faster than a quadratic function of $\ab^i$ as $\|\ab^i\|\to\infty$.
\end{assumption}

Let us provide some insight on the assumptions above. 
   \begin{rem}\label{rem:pseudomon}
If agents' cost functions are quadratic and coupling constraints are linear, the extended game mapping is affine, namely $\Mb^0(\ab, \bla) = M[\ab, \bla]^{\top} + \mb$, where $M \in \R^{(Nd+n)\times (Nd+n)}$ and $\mb \in \R^{Nd+n}$. The affine mapping above is pseudo-monotone if $M$ is positive semi-definite \cite{AffinePsedoMonot}. This is in particular fulfilled if the quadratic forms of the cost functions are positive definite or semi-definite (see \cite{paccagnan2016aggregative} and \cite{Tat_ifac2017}, respectively). However, if the affine map is pseudo-monotone for every $\mb$ then $M$ is positive semi-definite and hence the map is also monotone \cite{AffinePsedoMonot}. In  general, monotonicity implies pseudo-monotonicity and the  former is more stringent\footnote{As an example, gradient of any pseudo-convex function such as $x^3 + x$ is pseudo-monotone but not necessarily monotone.}.
 \end{rem}
 
  \begin{rem}\label{rem:lipsch}
Since the extended mapping $\Mb^0(\ab,\bla)$ is affine in $\bla$, the Lipschitz condition for $\Mb^0(\ab,\bla)$ in Assumption\r\ref{assum:CG_grad} above holds if the coordinates $\Mb_i(\ab)$, $i\in[N]$, of game mapping $\Mb(\ab)$ and the functions $\frac{\partial g_j(\ab)}{\partial a^i_k}$, $j\in[n]$, $i\in[N]$, $k\in[d]$, are Lipschitz with respect to their  argument $\ab=(\ab^1,\ldots,\ab^N)$ with some constants $l_i$, $l_{j,i}^k$ respectively.
 \end{rem}

 \begin{rem}\label{rem:fun_g}
Given Lipschitz continuity of $\gb$ on $\R^{Nd}$, the functions $g_i(\ab), i\in[n]$, grow not faster than a linear function of $\ab$ as $\|\ab\|\to\infty$. Furthermore, since the action sets are compact, we can always approximate $J_i$ outside $A_i$ by a quadratic function without loss of generality.
 \end{rem}

\subsection{Generalized Nash equilibria and Variational Inequalities}\label{subsec:problem_opt}
Here, we prove that the set of GNE is nonempty, given fulfillment of Assumptions\r\ref{assum:convex}-\ref{assum:Slaters} for the game $\Gamma (N, \{A_i\}, \{J_i\}, C)$. This result will be obtained through connecting generalized Nash equilibria and solutions of variational inequalities. Moreover, we model an uncoupled action game associated with the game $\Gamma$ and establish the relation between its Nash equilibria and the GNE of the game $\Gamma$. Existence of such an uncoupled action game will allow us to present a payoff-based approach to learning GNE in the initial game $\Gamma$.

% variational equilibria definition, existence of solution
\begin{definition}
Consider a mapping $\boldsymbol T(\cdot)$: $\R^d \to \R^d$ and a set $Y \subseteq \R^d$. The
\emph{solution set $SOL(Y,\boldsymbol T)$ to the variational inequality problem} $VI(Y,\boldsymbol T)$ is a set of vectors $\yb^* \in Y$ such that $(\boldsymbol T(\yb^*), \yb-\yb^*) \ge 0$, for all $\yb \in Y$.
\end{definition}
Given $VI(Y,\boldsymbol T)$, suppose that the set $Y$ is compact, convex and that the mapping $\boldsymbol T$ is continuous. Then, $SOL(Y,\boldsymbol T)$ is nonempty and compact (see Corollary 2.2.5 in \cite{FaccPang1}. )

% reduction of our game to a variational equilibria and existence of solution.
For a game $\Gamma (N, \{A_i\}, \{J_i\}, C)$ with coupled actions $\EQ=\Ab\cap C$, we can define $VI(\EQ,\boldsymbol M)$, where $\Mb$ is the game mapping defined in \eqref{eq:gamemapping}. Under  Assumption\r\ref{assum:convex}, Theorem\r2.1 in \cite{FaccFischer} implies that  $SOL(\EQ,\boldsymbol M)$ is non-empty and if $\ab^*\in SOL(\EQ,\boldsymbol M)$, then $\ab^*$ is a GNE in the game $\Gamma$. 
%Any GNE that arises from the solution of the corresponding $VI$ is called a \emph{variational  equilibrium}. 
%\footnote{Uniqueness of the variational equilibrium does not imply uniqueness of the generalized Nash equilibrium.}. 
%In our paper,  we deal with a broader class of games admitting multiple variational equilibria and investigate
%convergence to any one of them.

A challenge in developing a payoff-based learning based algorithm lies in the coupling constraint $C$. Hence,  our goal is to develop a game with uncoupled actions whose equilibria can be used to find those of  the original game $\Gamma$. To do so,  we first define an \emph{associated game} $\Gamma_{a}(\Ab\times \R^n_+)$ as follows:
 \begin{align}\label{eq:assocgame1}
  \Gamma_{a}(\Ab\times \R^n_+)=\Gamma_{a}(N+1,\{J^0_i\}_{i\in[N+1]}, \{\{A_i\}_{i\in[N]}, \R^n_+\}),
 \end{align}
 with $N+1$ players. The first $N$ players are called regular and the $(N+1)$th player is called dual. The action sets of the regular players coincide with the local action sets $\{A_i\}$ of the players in the initial game $\Gamma$, whereas the action set of the dual player is the set $\R_+^{n}$.
 The cost functions of the players in $\Gamma_{a}(\Ab\times \R_+^{n})$ are defined as follows:
 \begin{align}
 \nonumber
J^0_i(\ab^i,\ab^{-i},\bla)&=J_i(\ab^i,\ab^{-i})+(\bla, \gb(\ab^{i},\ab^{-i})), \quad i\in[N],\\
 \label{eq:assocgame2}
J^0_{N+1}(\ab,\bla)&=-(\bla, \gb(\ab)).
 \end{align}
So, the cost function of each regular player $i\in[N]$ in the game $\Gamma_{a}(\Ab\times \R_+^{n})$ is composed of two terms: the original cost function from the game $\Gamma$ plus an additional term that depends on the strategy $\bla$ of the dual player and on the influence of the current joint action in the coupling constraint. As $\bla\ge\boldsymbol 0$, the latter can be interpreted as a term penalizing violations of the global constraint by the given joint action.
%The cost of the dual player in turn ensures that the complementarity condition associated to the coupled constraint is met.
%
%The Nash equilibria in $\Gamma_{a}$ can be used to find Nash equilibria in $\Gamma$. In particular, we have the following result.
\begin{lem}\label{th:exist_uncoupled}
 Let $\Gamma (N, \{A_i\}, \{J_i\}, C)$ be a game for which Assumptions\r\ref{assum:convex}-\ref{assum:Slaters} hold.  Then,
 \begin{itemize}
 \item[1)]  $[\ab^*, \bla^*]\in \Ab\times \R_+^{n}$ is a Nash equilibrium in $\Gamma_{a}(\Ab\times \R_+^{n})$, if and only if $[\ab^*,\bla^*]\in SOL(\Ab\times \R_+^{n},\boldsymbol M^0)$,
 \item[2)] if  $[\ab^*,\bla^*]$ is a Nash equilibrium in $\Gamma_{a}(\Ab\times \R_+^{n})$, then $\ab^*$ is a GNE of $\Gamma$,
 \item[3)] there exists a Nash equilibrium $[\ab^*, \bla^*]$ in $\Gamma_{a}(\Ab\times \R_+^{n})$,
 \item[4)] for any Nash equilibrium $[\ab^*, \bla^*]$ in $\Gamma_{a}(\Ab\times \R_+^{n})$ there exists a constant $K > 0$ such that $\|\bla^*\|\le K$.

 \end{itemize}
 \end{lem}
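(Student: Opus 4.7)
The plan is to address the four items in order. Items (1)--(2) reduce to standard convex-game / KKT bookkeeping, item (3) combines nonemptiness of $SOL(\EQ,\Mb)$ with a KKT-for-VI step, and item (4) falls out of the Slater point.

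For item (1), under Assumption \ref{assum:convex} and because $\bla^*\ge \zz$, each $J_i^0(\cdot,\ab^{*-i},\bla^*)$ is convex in $\ab^i$ on the convex set $A_i$, and $J^0_{N+1}(\ab^*,\cdot)$ is linear in $\bla$. Best-response optimality of the NE at $\ab^{*i}$ and $\bla^*$ therefore reduces via first-order conditions to $(\nabla_{\ab^i}J_i^0(\ab^*,\bla^*),\ab^i-\ab^{*i})\ge 0$ for all $\ab^i\in A_i$ and $(-\gb(\ab^*),\bla-\bla^*)\ge 0$ for all $\bla\in\R^n_+$. Concatenating gives precisely $(\Mb^0(\ab^*,\bla^*),[\ab,\bla]^\top-[\ab^*,\bla^*]^\top)\ge 0$ over $\Ab\times\R^n_+$; conversely, restricting VI test vectors to coordinate slices recovers the individual inequalities. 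For item (2), the dual block with coordinates of $\bla$ tending to $+\infty$ forces $\gb(\ab^*)\le\zz$, so $\ab^*\in\EQ$; taking $\bla=\zz$ and $\bla=2\bla^*$ in the same inequality yields the complementarity $(\bla^*,\gb(\ab^*))=0$. Then for any $\ab^i\in\EQ^i(\ab^{*-i})$, the regular-player NE inequality $J_i^0(\ab^{*i},\ab^{*-i},\bla^*)\le J_i^0(\ab^i,\ab^{*-i},\bla^*)$ simplifies using complementarity on the left and $(\bla^*,\gb(\ab^i,\ab^{*-i}))\le 0$ on the right to $J_i(\ab^{*i},\ab^{*-i})\le J_i(\ab^i,\ab^{*-i})$, so $\ab^*$ is a GNE of $\Gamma$.

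For item (3), the discussion right before the lemma already gives $SOL(\EQ,\Mb)\ne\emptyset$ via compactness of $\EQ=\Ab\cap C$ and continuity of $\Mb$; fix $\ab^*\in SOL(\EQ,\Mb)$. Under the Slater CQ (Assumption \ref{assum:Slaters}) for the description $\EQ=\{\ab\in\Ab:\gb(\ab)\le\zz\}$, the KKT conditions for $VI(\EQ,\Mb)$ produce $\bla^*\in\R^n_+$ satisfying $\Mb(\ab^*)+\nabla_\ab(\bla^*,\gb(\ab^*))\in -N_\Ab(\ab^*)$, $\gb(\ab^*)\le\zz$, and $(\bla^*,\gb(\ab^*))=0$. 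These are exactly the componentwise conditions characterizing $[\ab^*,\bla^*]\in SOL(\Ab\times\R^n_+,\Mb^0)$, so by item (1) the pair is a NE of $\Gamma_a$.

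For item (4), let $\tilde{\ab}\in\EQ$ be a Slater point and set $\delta:=\min_{j\in[n]}(-g_j(\tilde{\ab}))>0$. Plugging $\ab=\tilde{\ab}$ into the $\ab$-block of the VI for $[\ab^*,\bla^*]$ gives
$(\Mb(\ab^*),\tilde{\ab}-\ab^*)+\sum_j\lambda_j^*(\nabla_\ab g_j(\ab^*),\tilde{\ab}-\ab^*)\ge 0$.
Convexity of each $g_j$ yields $(\nabla_\ab g_j(\ab^*),\tilde{\ab}-\ab^*)\le g_j(\tilde{\ab})-g_j(\ab^*)$, and combining with $(\bla^*,\gb(\ab^*))=0$ from item (2) this becomes $\sum_j\lambda_j^*(\nabla_\ab g_j(\ab^*),\tilde{\ab}-\ab^*)\le(\bla^*,\gb(\tilde{\ab}))\le-\delta\|\bla^*\|_1$. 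Hence $\delta\|\bla^*\|_1\le(\Mb(\ab^*),\tilde{\ab}-\ab^*)\le\|\Mb(\ab^*)\|\cdot\|\tilde{\ab}-\ab^*\|$, and the right-hand side is uniformly bounded in $\ab^*\in\Ab$ by compactness of $\Ab$ and continuity of $\Mb$, yielding the desired constant $K$. The main obstacle I anticipate is item (3): producing $\bla^*$ cleanly from a given $\ab^*\in SOL(\EQ,\Mb)$ is the standard KKT-for-VI step under Slater, but it has to be cited precisely so that the normal-cone / gradient representation lines up with the dual block of $\Mb^0$; items (1), (2), (4) are essentially bookkeeping once this correspondence is in place.
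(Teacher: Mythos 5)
Your proposal is correct, and its skeleton matches the paper's: equivalence of Nash equilibria of $\Gamma_a$ with $SOL(\Ab\times\R^n_+,\Mb^0)$, a KKT correspondence with $VI(\EQ,\Mb)$ to get existence, and a Slater-point argument for boundedness of $\bla^*$. The difference is in how much is proved versus cited. The paper's proof is essentially a chain of citations: it invokes Proposition 1.3.4 and 1.4.2 of Facchinei--Pang for the KKT characterization and existence, Lemma 3 of \cite{dario2016cdc} for the equivalence ``KKT tuple of $VI(\EQ,\Mb)$ $\Leftrightarrow$ NE of $\Gamma_a$'' (which subsumes your items (1)--(2)), and Lemma 5.1 of \cite{ZhuFrazzoli} for item (4). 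You instead prove items (1), (2) and (4) from scratch: (1) via first-order optimality over the product set (convexity of $J_i^0$ in $\ab^i$ for $\bla^*\ge\zz$, linearity in $\bla$), (2) via the complementarity $(\bla^*,\gb(\ab^*))=0$ extracted from the dual block, and (4) via the explicit Slater-point bound $\delta\|\bla^*\|_1\le(\Mb(\ab^*),\tilde\ab-\ab^*)$, which is exactly the content of the cited Lemma 5.1 made self-contained. This buys transparency and removes two external dependencies; what it costs is that your item (3) still rests on the one step both proofs must cite -- the KKT conditions for $VI(\EQ,\Mb)$ with $\EQ$ written as the abstract convex set $\Ab$ intersected with $\gb\le\zz$ under Slater's CQ -- and, as you note yourself, this must be quoted in the form that keeps $\Ab$ abstract (normal-cone version) so that the multiplier attaches only to $\gb$, matching the dual block of $\Mb^0$; the paper resolves exactly this point by citing Proposition 1.3.4 of Facchinei--Pang together with Lemma 3 of \cite{dario2016cdc}. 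With that citation made precise, your argument is complete.
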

Please refer to the appendix for the proof of the above lemma.

%The proof of the above theorem is provided in the appendix.

\section{Payoff-Based Algorithm}\label{sec:analysis}

%\subsection{Payoff-Based Algorithm}
Let $\xb^i(t)=[x^i_1(t),\ldots,x^i_d(t)]^{\top}\in\R^{d}$ denote the stra\-tegy of player $i$ at iteration $t$ referred to as its \textit{state} and $\hat J^0_i(t) = J^0_i(\xb^1(t),\ldots,\xb^N(t),\bla(t))$ the current value of its cost at a joint state. Each regular player $i\in[N]$, ``mixes'' its  state, namely, it chooses its  state $\xb^i(t)$ randomly according to the multidimensional normal distribution $\EuScript N(\bmu^i(t)=[\mu^i_1(t),\ldots,\mu^i_{d}(t)]^{\top},\sigma_i(t))$ with density:
\begin{align*}
 p_i&(x^i_1,\ldots,x^i_{d};\bmu^i(t),\sigma_i(t))\cr
 & = \frac{1}{(\sqrt{2\pi}\sigma_i(t))^{d}}\exp\left\{-\sum_{k=1}^{d}\frac{(x^i_k-\mu^i_k(t))^2}{2\sigma_i^2(t)}\right\},
\end{align*}
where $i\in[N]$.
Our choice of Gaussian distribution is based on the idea of  Continuous Action-set Learning Automaton presented in the literature on learning automata \cite{Thatha}.

The mean $\bmu^i(t)$ of the state's distribution is considered an action of the regular agent $i$ and is updated as follows:
\begin{align}
\label{eq:regpl}
 &\bmu^i (t+1)=\\
 \nonumber
 & \Proj_{A_i}\left[\bmu^i(t) -\gamma_i(t+1)\sigma_i^2(t+1){\hat J^0_i(t)}\frac{{\xb^i(t)} -\bmu^i(t)}{\sigma_i^2(t)} \right],
 \end{align}
where $i\in[N]$, $\gamma_i(t+1)$ is a step-size parameter chosen by  player $i$ and $\Proj_{A_i}[\cdot]$ denotes the projection on set $A_i$. The initial value of $\bmu(0)$ can be set to any finite value arbitrarily.

As for the dual player $N+1$, it updates its current action $\bla(t)$ based only on the observation of the violation of the constraint $C$, namely based on the actual value $\hat \gb(t)$, of the function $\gb(\xb^1(t),\ldots,\xb^N(t))$ at the current joint state of the regular players as follows:
\begin{align}\label{eq:dualpl}
 \bla(t+1)=\Proj_{\R^n_{+}}[\bla(t) + \beta_0(t+1)\hat \gb(t)],
\end{align}
where $\beta_0(t+1)$ is a step-size parameter chosen by the dual player $N+1$.
The initial  value of $\bla(0)$ can be arbitrarily set to any finite value. 

Note that in contrast to the approach in computing generalized Nash equilibria presented in \cite{ZhuFrazzoli}, our proposed payoff-based algorithm does not rely on the specified bound $K$ of the dual variable $\bla^*$ in the associated bounded game $\Gamma_{ab}(\Ab\times\R^n_{\le K+r})$, nor does it assume a communication graph between agents to estimate local gradients.

To analyze the convergence of this algorithm,   we show that this algorithm is analogous to the Robbins-Monro stochastic approximation procedure \cite{Borkar}. 

Given $\bsigma=(\sigma_1,\ldots,\sigma_N)$, for any $j\in[N+1]$ define
 \begin{align*}
  \tilde{J}^0_j &(\bmu^1,\ldots,\bmu^N, \bla, \bsigma)= \int_{\mathbb R^{Nd}}J^0_j(\bx,\bla)p(\bmu, \bx, \bsigma)d\bx,  \cr
  &p(\bmu, \bx, \bsigma)=\prod_{j=1}^Np_j(x^j_1,\ldots,x^j_{d};\bmu^j,\sigma_j).
 \end{align*}
For $j\in[N+1]$, $\tilde{J}^0_j$ can be interpreted as the $j$th player's cost function in mixed strategies of the regular players, given that the mixed strategies of these players are multivariate normal distributions $\{\EuScript N(\bmu^i,\sigma_i)\}_{i\in[N]}$. It follows that the second terms inside the projections in \eqref{eq:regpl} and \eqref{eq:dualpl} are samples of the gradient of the cost function with respect to these mixed strategies. In particular, we can verify that for all $\bmu\in\Ab$\footnote{Assumption\r\ref{assum:inftybeh} and compact set $\Ab$ justify differentiation under the integral sign of $\tilde J^0_i(\bmu^1(t),\ldots,\bmu^N(t), \bla(t), \bsigma(t))$.}
\begin{align}
\label{eq:mathexp}
 &\E_{\xb(t)}\{\hat J^0_i(t)\frac{x^i_k(t) -\mu^i_k(t)}{\sigma_i^2(t)}\} \\
 \nonumber
 =&\E\{J^0_i(\xb^1(t),\ldots,\xb^N(t),\bla(t))\frac{x^i_k(t) -\mu^i_k(t)}{\sigma_i^2(t)}|\\
 \nonumber
&\qquad\qquad\qquad\quad x^i_k(t)\sim\EuScript N(\mu_k^i(t),\sigma_i(t)), i\in[N], k\in[d]\}\\
 \nonumber
 = & \frac{\partial {\tilde J^0_i(\bmu^1(t),\ldots,\bmu^N(t), \bla(t), \bsigma(t))}}{\partial \mu^i_k},\; \forall i\in[N], k\in[d],\\
\label{eq:mathexp1}
 & \E_{\xb(t)}\{\hat \gb(t)\}=\E\{ \gb(\xb^1(t),\ldots,\xb^N(t))|\\
 \nonumber
  &\qquad\quad x^i_k(t)\sim\EuScript N(\mu_k^i(t),\sigma_i(t)), i\in[N], k\in[d]\}\\
 \nonumber
 = &- \nabla_{\bla}{{\tilde J^0_{N+1}(\bmu^1(t),\ldots,\bmu^N(t), \bla(t), \bsigma(t))}}.
 \end{align}

%Using further the notation $\be_i(t) =[\bmu(t),\bla(t)]$,

Using the notation $\be(t) =[\bmu(t),\bla(t)]$, we can rewrite the algorithm steps \eqref{eq:regpl}-\eqref{eq:dualpl} in the following form:
 \begin{align}
 \label{eq:pbavmu}
&\bmu^i(t+1) =\Proj_{\Ab}[\bmu^i(t) -\gamma_i(t+1)\sigma_i^2(t+1)\\
\nonumber
&\times\big(\Mb_i^0(\be(t)) +\Qb_i(\be(t),\bsigma(t))+\Rb_i(\be(t),\xb(t),\bsigma(t))\big)],\\
\label{eq:pbavlam}
&\bla(t+1) =\Proj_{\R^n_{+}}[\bla(t) -\beta_0(t+1)\times\big(-{\gb}(\bmu(t)) \\
\nonumber
&+\Qb_{N+1}(\be(t),\bsigma(t))+\Rb_{N+1}(\be(t),\xb(t),\bsigma(t))\big)],
\end{align}
where for $i\in[N]$
 \begin{align*}
&\Qb_i(\be(t),\bsigma(t)) =\tilde{\Mb}_i^0 (\be(t),\bsigma(t)) -\Mb_i^0(\be(t)),\cr
&\Rb_i(\xb(t),\be(t),\bsigma(t)) = \boldsymbol F_i(\xb(t),\be(t), \bsigma(t)) - \tilde{\Mb}_i^0 (\be(t),\sigma(t)), \cr
&\boldsymbol F_i(\xb(t),\be(t), \bsigma(t)) ={\hat J}^0_i(t)\frac{\xb^i(t) -\bmu^i(t)}{\sigma_i^2(t)} ,
\end{align*}
and $\tilde{\Mb}_i^0 (\cdot)=[\tilde M^0_{i,1}(\cdot), \ldots, \tilde M^0_{i,d}(\cdot)]^{\top}$
is the $d$-dimensional mapping with the following elements:
\begin{align}\label{eq:mapp2}
\tilde M^0_{i,k} (\be(t),\bsigma(t))=\frac{\partial {\tilde J^0_i(\be(t), \bsigma(t))}}{\partial \mu^i_k}, \mbox{ for $k\in[d]$}.
\end{align}
Furthermore,
 \begin{align*}
&\Qb_{N+1}(\be(t),\bsigma(t)) = \tilde {\boldsymbol M}^0_{N+1}(\be(t),\bsigma(t)) + {\gb}(\bmu(t)),\cr
&\Rb_{N+1}(\xb(t),\be(t),\bsigma(t))= -\hat \gb(t) - \tilde{\Mb}^0_{N+1} (\be(t),\bsigma(t)),
 \end{align*}
and $\tilde {\boldsymbol M}^0_{N+1}(\be(t),\bsigma(t))=\nabla_{\bla}{\tilde J^0_{N+1}(\be(t), \bsigma(t))}$.
The algorithm \eqref{eq:pbavmu}-\eqref{eq:pbavlam} falls under the framework of Robbins-Monro stochastic approximations procedure  \cite{Borkar}, where
$$\Mb^0(\be(t))=[\Mb_1^0(\be(t)), \ldots, \Mb_N^0(\be(t)), -{\gb}(\bmu(t))],$$
corresponds to the gradient term in stochastic approximation procedures. Furthermore,
 \begin{align*}
  \Qb(\be(t),\bsigma(t)) = [\Qb_1(\be(t),\bsigma(t)),\ldots,&\Qb_N(\be(t),\bsigma(t)),\cr
&\Qb_{N+1}(\be(t),\bsigma(t))],
 \end{align*}
is a disturbance of the gradient term, and
 \begin{align*}
  \Rb(\xb(t), \be(t),&\bsigma(t)) = [\Rb_1(\xb(t),\be(t),\bsigma(t)),\ldots,\cr
&\Rb_N(\xb(t), \be(t),\bsigma(t)),\Rb_{N+1}(\xb(t),\be(t),\bsigma(t))],
 \end{align*}
is a Martingale difference. Namely, according to \eqref{eq:mathexp} and \eqref{eq:mathexp1},
\begin{align}
\label{eq:mathexp2}
 &\Rb_i(\xb(t),\be(t),\bsigma(t)) = \boldsymbol F_i(\xb(t),\be(t), \bsigma(t)) \\
\nonumber
 &- \E_{\xb(t)}\{\boldsymbol F_i(\xb(t),\be(t), \bsigma(t))\},\; \;i\in[N], \\
\label{eq:mathexp3}
 & \Rb_{N+1}(\xb(t),\be(t),\bsigma(t))= -\hat \gb(t) + \E_{\xb(t)}\{\hat \gb(t)\}.
\end{align}

To ensure convergence of the iterates $\be(t)$, the step-sizes  $\beta_0(t)$, $\sigma_i(t)$, $\gamma_i(t)$, $i\in[N]$, need to satisfy certain assumptions. Let $\beta_i(t)=\gamma_i(t)\sigma_i^2(t)$, $\bemin(t)=\min_{i\in\{0, [N]\}}\beta_i(t)$, $\bemax(t)=\max_{i\in\{0, [N]\}}\beta_i(t)$, $\gamax(t)=\max_{i\in[N]}\gamma_i(t)$.

\begin{assumption}\label{assum:timestep}
The variance parameters $\sigma_i(t)$ and the step-size parameters $\beta_0(t)$, $\gamma_i(t)$, $i\in[N]$, are chosen such that

 1) $\sum_{t=0}^{\infty}\bemin(t)=\infty$,

 2) $\sum_{t=0}^{\infty}\bemax(t)-\bemin(t)<\infty,$

 3) $\sum_{t=0}^{\infty}\gamax^2(t)<\infty,$ $\sum_{t=0}^{\infty}\gamax(t)\sigmax^3(t) < \infty$.

 \end{assumption}
 \begin{rem}\label{rem:example}
 Similar to \cite{Kann_Shan} the agents choose algorithm parameters independently and the only coordination is in Assertion 2) above.
An example of sequences $\gamma_i(t)$, $\sigma_i(t)$, $i\in[N]$, $\beta_0(t)$ is the protocol for distributed optimization schemes  \cite{Kann_Shan}, where each regular agent picks  a positive integer $R_i$, $i\in [N]$, the dual player picks  a positive integer $N_0$, and  %\maryam{what is a definite coordination statement}
  $\gamma_i(t)=\frac{1}{(t+R_i)^a}$, $\sigma_i(t)=\frac{1}{(t+R_i)^b}$, $i\in[N]$, $\beta_0(t)=\frac{1}{(t+N_0)^{a+2b}}$, with $a+2b\in(0.5,1]$, $2a>1$, and $a+3b>1$.
 \end{rem}

\mk{For our convergence results under coupling constraints, we will consider potential games as defined below.  }

\begin{assumption}\label{assum:potential}
 The game $\Gamma(N, \{A_i\}, \{J_i\}, C)$ admits a strictly convex potential function  $f:\R^{Nd} \rightarrow \R$, with $\frac{\partial f(\ab)}{\partial a^i_k} = M_{i,k}(\ab)$. This is equivalent to the Jacobian of the game mapping $\Mb:\R^{Nd}\to\R^{Nd}$ being symmetric  (Theorem 1.3.1 in \cite{FaccPang1}. )
 \end{assumption}

\begin{theorem}\label{th:main}
  Let Assumptions\r\ref{assum:convex}-\ref{assum:inftybeh}, \ref{assum:potential} hold in a game $\Gamma(N, \{A_i\}, \{J_i\}, C)$. 
  %Let the players in $\Gamma$ be considered regular players in the associated bounded game $\Gamma_{ab}(\Ab\times\R^n_{\le K+r})$ with uncoupled actions. 
  Let the regular players choose the states $\{\xb^i(t)\}$ at time $t$ according to the normal distribution $\EuScript N(\bmu^i(t),\sigma_i(t))$, where the mean parameters are updated as in \eqref{eq:regpl}. Let the action $\bla(t)$ of the dual player is updated according to \eqref{eq:dualpl}. Let the variance parameters and the step-size parameters satisfy Assumption\r\ref{assum:timestep} Then, as $t\to\infty$, the mean vector $\bmu(t)$ converges almost surely to the generalized Nash equilibrium $\bmu^*=\ab^*$ of the game $\Gamma$,  given any initial vector $[\bmu(0),\bla(0)]$, and the joint state $\xb(t)$ converges in probability to $\ab^*$.
\end{theorem}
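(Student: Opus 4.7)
The plan is to recast the update \eqref{eq:pbavmu}--\eqref{eq:pbavlam} as a projected Robbins--Monro stochastic approximation driven by the extended game mapping $\Mb^0$, and then to exploit the strictly convex potential from Assumption~\ref{assum:potential} to build a Lyapunov function on $\Ab\times\R^n_+$ that, via the Robbins--Siegmund lemma, yields almost sure convergence of $\bmu(t)$ to the unique GNE. The decomposition into mean-field $\Mb^0$, bias $\Qb$, and martingale difference $\Rb$ is already carried out in \eqref{eq:pbavmu}--\eqref{eq:mathexp3}, so the work reduces to (i) bounding $\Qb$ and the conditional second moment of $\Rb$, (ii) establishing a.s.\ boundedness of $\bla(t)$, and (iii) producing a Lyapunov recursion of Robbins--Siegmund type.

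For (i), the Lipschitz continuity of $\Mb^0$ in $\ab$ (Assumption~\ref{assum:CG_grad}), together with the fact that convolution with a Gaussian of scale $\sigma$ shifts a Lipschitz function by $O(\sigma)$, gives $\|\Qb(\be,\bsigma)\|=O((1+\|\bla\|)\sigmax)$. For $\Rb$, the quadratic growth of $J_i$ (Assumption~\ref{assum:inftybeh}) combined with the standard identity $\E\|\xb^i-\bmu^i\|^2=d\sigma_i^2$ yields $\E[\|\Rb\|^2\mid\mathcal F_t]=O((1+\|\bla(t)\|^2)/\sigmax^2(t))$. Multiplied by $\bemax(t)$ and $\bemax^2(t)$ respectively, Assumption~\ref{assum:timestep} makes $\sum \bemax(t)\|\Qb\|\le C\sum\gamax(t)\sigmax^3(t)$ and $\sum\bemax^2(t)\E\|\Rb\|^2\le C\sum\gamax^2(t)$ both finite on any event where $\|\bla(t)\|$ stays bounded. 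For (ii), $\bmu(t)\in\Ab$ is bounded by compactness; for $\bla(t)$ I fix a bounded dual equilibrium $\bla^*$ guaranteed by Lemma~\ref{th:exist_uncoupled}(4), and use pseudo-monotonicity of $\Mb^0$ together with the Slater point from Assumption~\ref{assum:Slaters} in a Robbins--Siegmund-type estimate on $\|\bla(t)-\bla^*\|^2$ to prevent escape to infinity.

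For (iii), let $\ab^*$ be the unique minimizer of $f$ on $\EQ$ and $\bla^*$ a corresponding KKT multiplier. With $V(\be)=\tfrac12\|\be-\be^*\|^2$ and $L(\ab,\bla)=f(\ab)+(\bla,\gb(\ab))$, convexity of $L$ in $\ab$ together with the KKT relations $\gb(\ab^*)\le\zz$, $\bla^*\ge\zz$, $(\bla^*,\gb(\ab^*))=0$ give
\begin{equation*}
(\Mb^0(\be),\be-\be^*)\ge L(\bmu,\bla^*)-L(\ab^*,\bla^*)-(\bla,\gb(\ab^*))\ge 0,
\end{equation*}
with strict inequality whenever $\bmu\ne\ab^*$ by strict convexity of $L(\cdot,\bla^*)$. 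Combining this with non-expansivity of the projection and the bounds on $\Qb$ and $\Rb$ from step~(i) produces
\begin{equation*}
\E[V(\be(t+1))\mid\mathcal F_t]\le(1+u(t))V(\be(t))-\bemin(t)\phi(\be(t))+v(t),
\end{equation*}
with $\phi\ge 0$ vanishing only at the equilibrium and $\sum u(t),\sum v(t)<\infty$ a.s. The Robbins--Siegmund lemma then yields $V(\be(t))\to V_\infty$ and $\sum\bemin(t)\phi(\be(t))<\infty$; divergence of $\sum\bemin(t)$ forces $\phi(\be(t))\to 0$ along a subsequence, and strict convexity of $f$ upgrades this to $\bmu(t)\to\ab^*$ a.s. Convergence in probability of $\xb(t)$ follows from $\xb(t)\sim\EuScript N(\bmu(t),\sigma(t))$ with $\sigma(t)\to 0$ and Chebyshev's inequality.

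The main difficulty is the interplay between shrinking variance and martingale noise: the conditional variance of $\Rb$ scales as $1/\sigma^2$, so one must match it against $\bemax^2=\gamax^2\sigmax^4$ to obtain a summable contribution $\gamax^2\sigmax^2$, while simultaneously keeping $\bemax\|\Qb\|$ of order $\gamax\sigmax^3$ summable---precisely the budget carved out by Assumption~\ref{assum:timestep}. A secondary subtlety is the a priori boundedness of $\bla(t)$, which has to be extracted from pseudo-monotonicity and Slater rather than assumed; without Assumption~\ref{assum:potential} the Lyapunov argument in step~(iii) fails, which is why the coupling-constraint case relies on a potential function.
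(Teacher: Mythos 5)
Your overall architecture matches the paper's: recast \eqref{eq:regpl}--\eqref{eq:dualpl} as a projected Robbins--Monro scheme with mean field $\Mb^0$, bias $\Qb$ and martingale noise $\Rb$, work with the joint Lyapunov function $\|\be-\be^*\|^2$, lower-bound the drift by the Lagrangian bracket $[\Lag(\bmu,\bla^*)-\Lag(\bmu^*,\bla^*)]+[-(\bla,\gb(\bmu^*))]$ via convexity and KKT, apply Robbins--Siegmund, and extract a subsequence. However, the last step hides a genuine gap. Robbins--Siegmund gives you (a) convergence of the \emph{joint} distance $\|\be(t)-\be^*\|$ and (b) $\sum_t\bemin(t)\phi(\be(t))<\infty$, hence only a subsequence along which $\phi\to 0$, i.e.\ $\bmu(t_{l_s})\to\bmu^*$ and $(\bla(t_{l_s}),\gb(\bmu^*))\to 0$. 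Strict convexity of $f$ does not ``upgrade'' this to $\bmu(t)\to\bmu^*$: since $\bla(t)$ need not converge to $\bla^*$, convergence of $\|\bmu(t)-\bmu^*\|^2+\|\bla(t)-\bla^*\|^2$ is compatible with the primal component oscillating while the dual component compensates, and $\sum_t\bemin(t)\phi(\be(t))<\infty$ only forces $\liminf\phi=0$, not $\phi\to 0$. The paper closes this by using \emph{both} bracket terms along the subsequence: a further subsequence $\be(t_{l_s})\to\hat\be=(\bmu^*,\hat\bla)$ satisfies $\Lag(\hat\bmu,\bla^*)=\Lag(\bmu^*,\bla^*)$ and $\Lag(\bmu^*,\hat\bla)=\Lag(\bmu^*,\bla^*)$, so $\hat\be$ is itself an optimal primal--dual pair (a Nash equilibrium of $\Gamma_a$); since assertion (a) holds for \emph{every} such equilibrium, it is re-applied with $\be^*$ replaced by $\hat\be$, and the convergent subsequence then forces $\be(t)\to\hat\be$, hence $\bmu(t)\to\bmu^*$. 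Your proposal never establishes anything about the dual limit point and never re-applies the distance-convergence statement to it, so the passage from subsequential to full convergence is unjustified. (Relatedly, your claim that $\phi\ge 0$ ``vanishes only at the equilibrium'' is false: $\phi(\bmu^*,\bla)=0$ for every $\bla\ge\boldsymbol 0$ with $(\bla,\gb(\bmu^*))=0$.)

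A second, smaller problem is your step (ii). You propose to bound $\bla(t)$ by a Robbins--Siegmund estimate on $\|\bla(t)-\bla^*\|^2$ alone, ``using pseudo-monotonicity and Slater''. Pseudo-monotonicity is a property of the joint extended map $\Mb^0$ on $\EQ\times\R^n_+$ and cannot be invoked block-wise; the dual-only drift term is $+2\beta_0(t+1)(\gb(\bmu(t)),\bla(t)-\bla^*)$, which has no favourable sign (it is positive and grows linearly in $\|\bla(t)\|$ whenever the constraint is violated at $\bmu(t)$), so the resulting recursion has a multiplicative term of order $\beta_0(t)$ with $\sum_t\beta_0(t)=\infty$ and Robbins--Siegmund does not apply. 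The paper instead proves boundedness \emph{jointly} (Lemma~\ref{lem:boundedvec}): with $V(\be)=\|\be-\be^*\|^2$ for a bounded equilibrium $\be^*$ of the associated game (Lemma~\ref{th:exist_uncoupled}), pseudo-monotonicity of the full map gives $(\Mb^0(\be),\be-\be^*)\ge 0$, and the stability theorem for Markov processes (Theorem~\ref{th:finiteness}) yields $\sup_t\|\be(t)\|<\infty$ a.s.; this boundedness is then what makes the bias and noise contributions summable in \eqref{eq:dist2}. Your moment bounds in step (i) are essentially those of the paper, but they only become usable after this joint boundedness argument, not after a dual-only one.
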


 \begin{rem}
Analogously to optimization methods based on the gradient descent iterations,  condition 1) in Assumption\r\ref{assum:timestep}, $\sum_{t=0}^{\infty}\bemin(t) = \infty$, guarantees sufficient energy for the time-step parameter $\gamma_i(t)\sigma_i^2(t)$ to let the algorithm \eqref{eq:pbavmu}-\eqref{eq:pbavlam} get to a neighborhood of a desired stationary point for each $i$, whereas  condition $\sum_{t=0}^{\infty}\gamax^2(t) < \infty$ ensures the algorithm converges as time goes to infinity.
\end{rem}

\section{Analysis of the Algorithm Convergence}\label{sec:proof}
Our approach in proving Theorem\r\ref{th:main} is to first prove boundedness of the iterates $\be(t)$.  Next, we show that the limit of the iterates $\be(t)$ exists and satisfies the conditions for being a variational equilibrium of the game $\Gamma(N, \{A_i\}, \{J_i\}, C)$.
\subsection{Boundedness of the Algorithm Iterates}
First, we demonstrate that under conditions of Theorem\r\ref{th:main} the vector $\be(t)$ stays almost surely bounded for any $t\in\Z_+$. 
\begin{lem}\label{lem:boundedvec}
 Let Assumptions\r\ref{assum:convex}-\ref{assum:inftybeh} hold in a game $\Gamma(N, \{A_i\}, \{J_i\}, C)$ with coupled actions and $\be(t)=[\bmu(t),\bla(t)]$ be the vector updated in the run of the payoff-based algorithm \eqref{eq:pbavmu}-\eqref{eq:pbavlam}. Let the variance parameters and the step-size parameters satisfy Assumption\r\ref{assum:timestep}. Then, $\Pr\{\sup_{t\ge 0}\|\be(t)\|< \infty\}=1$.
\end{lem}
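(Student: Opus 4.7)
The plan is to reduce the problem to bounding the dual sequence $\bla(t)$, since the primal iterates $\bmu^i(t)$ lie deterministically in the compact sets $A_i$ because of the projection step in \eqref{eq:pbavmu}. To control $\|\bla(t)\|$, I would apply the Robbins--Siegmund almost-supermartingale convergence theorem to the joint Lyapunov function
\begin{equation*}
V(t) = \sum_{i=1}^{N}\|\bmu^i(t)-\ab^{*i}\|^2 + \|\bla(t)-\bla^*\|^2,
\end{equation*}
where $[\ab^*,\bla^*]$ is a Nash equilibrium of the associated game $\Gamma_a$ produced by Lemma \ref{th:exist_uncoupled}, with $\|\bla^*\|\le K$. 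Using non-expansiveness of the two projections in \eqref{eq:pbavmu}--\eqref{eq:pbavlam} and expanding the squared norms, one obtains
\begin{equation*}
V(t+1) \le V(t) - 2\sum_{i=1}^N \eta_i(t+1)(\boldsymbol F_i,\bmu^i(t)-\ab^{*i}) + 2\beta_0(t+1)(\hat\gb(t),\bla(t)-\bla^*) + O(\gamax^2(t+1))\,\mathcal S(t),
\end{equation*}
where $\eta_i(t+1)=\gamma_i(t+1)\sigma_i^2(t+1)$ and $\mathcal S(t)$ collects the second-moment terms $\|\boldsymbol F_i\|^2$ and $\|\hat\gb(t)\|^2$.

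The next step is to take conditional expectation with respect to the natural filtration. The martingale-difference components $\Rb_i$, $\Rb_{N+1}$ vanish by \eqref{eq:mathexp2}--\eqref{eq:mathexp3}, and the first-order drift is governed by $-2\bemin(t+1)(\tilde{\Mb}^0(\be(t),\bsigma(t)),\be(t)-\be^*)$. Two summable corrections arise here: first, replacing $\eta_i(t+1)$ and $\beta_0(t+1)$ by their common minimum $\bemin(t+1)$ introduces an error of order $(\bemax(t+1)-\bemin(t+1))(1+\|\bla(t)\|)$, which is summable by Assumption \ref{assum:timestep}(2); second, the Gaussian-smoothing bias $\|\Qb(\be(t),\bsigma(t))\|$ is estimated via Jensen together with the Lipschitz property of $\Mb^0$ in $\ab$ (Assumption \ref{assum:CG_grad}, noting $L_i(\bla)$ is linear in $\bla$) and of $\gb$, yielding $\|\Qb\|=O((1+\|\bla(t)\|)\sigmax(t+1))$, so its contribution to the drift is $O(\bemin(t+1)\sigmax(t+1))(1+V(t))$, summable since $\bemin\sigmax\le \gamax\sigmax^3$ and $\sum \gamax\sigmax^3<\infty$ by Assumption \ref{assum:timestep}(3). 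What remains in the principal drift, $-2\bemin(t+1)(\Mb^0(\be(t)),\be(t)-\be^*)$, is non-positive because pseudo-monotonicity (Assumption \ref{assum:CG_grad}) applied to the variational solution $\be^*$ from Lemma \ref{th:exist_uncoupled} gives $(\Mb^0(\be(t)),\be(t)-\be^*)\ge 0$.

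The main technical obstacle is controlling $\E[\|\boldsymbol F_i\|^2\,|\,\mathcal F_t]$: since $J^0_i(\ab,\bla)=J_i(\ab)+(\bla,\gb(\ab))$ depends linearly on the still-unbounded $\bla(t)$ and by Assumption \ref{assum:inftybeh} $J_i$ grows at most quadratically, one gets $|\hat J^0_i(t)|\le C(1+\|\bla(t)\|)(1+\|\xb(t)\|^2)$. Combined with the normalizer $(\xb^i-\bmu^i)/\sigma_i^2$ and standard Gaussian moment bounds (using that $\bmu(t)\in\Ab$ is bounded), this yields
\begin{equation*}
\E\bigl[\|\boldsymbol F_i\|^2\bigm|\mathcal F_t\bigr] \le \frac{C(1+\|\bla(t)\|^2)}{\sigma_i^2(t)},
\end{equation*}
so that $\eta_i^2(t+1)\E[\|\boldsymbol F_i\|^2\,|\,\mathcal F_t]\le C\gamma_i^2(t+1)\sigma_i^2(t+1)(1+V(t))$, whose deterministic prefactor is summable because $\sum\gamax^2<\infty$. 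The dual-side term $\beta_0^2(t+1)\|\hat\gb(t)\|^2$ is already bounded by $O(\gamax^2(t+1))$ since $\gb$ is Lipschitz and $\bmu(t),\xb(t)$ have bounded second moments on $\Ab$. Assembling all the above estimates produces an inequality of Robbins--Siegmund form
\begin{equation*}
\E\bigl[V(t+1)\bigm|\mathcal F_t\bigr] \le (1+A(t))V(t) + C(t)
\end{equation*}
with deterministic sequences $\sum_t A(t)<\infty$ and $\sum_t C(t)<\infty$. The Robbins--Siegmund theorem then yields $\sup_t V(t)<\infty$ almost surely, hence $\Pr\{\sup_{t\ge 0}\|\be(t)\|<\infty\}=1$.
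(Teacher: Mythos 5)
Your proposal is correct and follows essentially the same route as the paper: the same Lyapunov function $\|\be(t)-\be^*\|^2$ centered at a Nash equilibrium of the associated game from Lemma~\ref{th:exist_uncoupled}, the same projection/non-expansiveness expansion, the same decomposition into the drift $\Mb^0$, the Gaussian-smoothing bias $\Qb$, and the martingale part $\Rb$, with pseudo-monotonicity killing the drift and Assumption~\ref{assum:timestep} giving summability of the remaining coefficients. The only (immaterial) difference is the endgame: you invoke Robbins--Siegmund (Theorem~\ref{th:th_nonnegrv}) to get almost-sure boundedness, whereas the paper cites the Markov-process stability result (Theorem~\ref{th:finiteness}); both apply to the derived recursion.
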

\begin{proof}
 %Let $\beta(t)=\gamma(t)\sigma^2(t)$.
In the following, for simplicity in notation, we omit the argument $\bsigma(t)$ in the terms $\Mb^0$, $\tilde{\Mb}^0$, $\Qb$, and $\Rb$. In certain derivations, for the same reason we omit the time parameter $t$ as well. According to the vector form of the algorithm \eqref{eq:pbavmu}-\eqref{eq:pbavlam},
\begin{align}
\label{eq:pbavmu1}
\bmu^i(t+1) =&\Proj_{A_i}[\bmu^i(t) -\beta_i(t+1)(\Mb_i^0(\be(t))\\
\nonumber
&+\Qb_i(\be(t))+\Rb_i(\be(t),\xb(t)))],\\
\label{eq:pbavlam2}
\bla(t+1) =&\Proj_{\R^n_{+}}[\bla(t) -\beta_0(t+1)\times(-{\gb}(\bmu(t)) \\
\nonumber
&+\Qb_{N+1}(\be(t))+\Rb_{N+1}(\be(t),\xb(t)))].
\end{align}
Let $\be^*=[\bmu^*,\bla^*]\in \Ab\times\R^n_{+}$ be a Nash equilibrium of the associated game $\Gamma_{a}(\Ab\times\R^n_{+})$. This equilibrium exists and its norm is bounded, namely $\|\be^*\|<\infty$, according to Lemma\r\ref{th:exist_uncoupled}, Assertions 1) and 3).

Let us define the function $V(\be)=\|\be-\be^*\|^2$. We  consider the generating operator of the Markov process $\be(t)$
\begin{align*}
LV(\be)=E[V(\be(t+1))\mid \be(t)=\be]-V(t,\be).
\end{align*}
Our goal is to show that $LV(\be(t))$ satisfies sufficient decay, as per results on stability of discrete-time Markov processes (\cite{NH}, Theorem\r2.5.2). That is,  \[LV(\be(t))\le -\alpha(t+1)\psi(\be(t)) + f(t)(1+V(\be(t))),\]
   where the functions $\psi(.) \ge 0$, $f(.)>0$ will be identified as per requirements of (\cite{NH}, Theorem\r2.5.2) (repeated in Theorem\r\ref{th:finiteness} in the appendix for completeness).

Taking into account the iterative procedure for the update of $\be(t)$ above and the non-expansion property of the projection operator on a convex set, we get
\begin{align}\label{eq:nonexp}
 \|&\bmu_i(t+1)-\bmu_i^*\|^2= \|\Proj_{A_i}[\bmu_i(t)-\beta_i(t+1)(\Mb_i^0(\be(t))\cr
 & \qquad\qquad\qquad  +\Qb_i(\be(t))+\Rb_i(\xb(t),\be(t)))]-\bmu_i^*\|^2 \cr
 &\le \|\bmu_i(t)-\bmu_i^* -\beta_i(t+1)(\Mb_i^0(\be(t))\cr
 &\qquad\qquad\qquad  +\Qb_i(\be(t))+\Rb_i(\xb(t),\be(t)))\|^2\cr
 & = \|\bmu_i(t)-\bmu_i^*\|^2 - 2\beta_i(t+1)(\Mb^0(\be(t)), \bmu_i(t)-\bmu_i^*) \cr
 &\qquad-2\beta_i(t+1)(\Qb_i(\be(t))+\Rb_i(\xb(t),\be(t)), \bmu_i(t)-\bmu_i^*) \cr
 &\qquad + \beta_i^2(t+1)\|\Gb_i(\xb(t),\be(t))\|^2,
\end{align}
where, for ease of notation we have defined
\begin{align}\label{eq:G_1}
 \Gb_i(\xb(t),\be(t)) = \Mb_i^0(\be(t)) +\Qb_i(\be(t))+\Rb_i(\xb(t),\be(t)).
\end{align}
Similarly, we can bound the dual term
\begin{align}\label{eq:nonexp1}
 &\|\bla(t+1)-\bla^*\|^2\cr
 &= \|\Proj_{\R^n_{+}}[\bla(t) -\beta_0(t+1)(-{\gb}(\bmu(t))+\Qb_{N+1}(\be(t))\cr
 &\qquad\qquad\qquad\qquad\quad\qquad+\Rb_{N+1}(\be(t),\xb(t)))]-\bla^*\|^2 \cr
 &\le \|\bla(t)-\bla^* -\beta_{0}(t+1)(-{\gb}(\bmu(t))+\Qb_{N+1}(\be(t))\cr
 &\qquad\qquad\qquad\qquad\quad\qquad  +\Rb_{N+1}(\xb(t),\be(t)))\|^2\cr
 & = \|\bla(t)-\bla^*\|^2 - 2\beta_0(t+1)(-{\gb}(\bmu(t)), \bla(t)-\bla^*) \cr
 &-2\beta_0(t+1)(\Qb_{N+1}(\be(t))+\Rb_{N+1}(\xb(t),\be(t)), \bla(t)-\bla^*) \cr
 &\qquad\qquad\qquad + \beta_0^2(t+1)\|\Gb_{N+1}(\xb(t),\be(t))\|^2,
\end{align}
with
\begin{align}\label{eq:G_2}
 \Gb&_{N+1}(\xb(t),\be(t))\cr
 &= -{\gb}(\bmu(t)) +\Qb_{N+1}(\be(t))+\Rb_{N+1}(\xb(t),\be(t)).
\end{align}
Thus, taking into account the Martingale properties in \eqref{eq:mathexp2} and \eqref{eq:mathexp3} of the terms $\Rb_j$, $j\in[N+1]$, we obtain
\begin{align}\label{eq:LV}
LV(\be)&=\E[\|\be(t+1)-\be^*\|^2|\be(t)=\be]-\|\be-\be^*\|^2\cr
=&\sum_{i=1}^N\left(\E[\|\bmu_i(t+1)-\bmu_i^*\|^2|\be(t)=\be]-\|\bmu_i-\bmu_i^*\|^2\right)\cr
&\qquad+E[\|\bla(t+1)-\bla^*\|^2|\be(t)=\be]-\|\bla-\bla^*\|\cr
\le&-2\sum_{i=1}^N\beta_i(t+1)(\Mb_i^0(\be), \bmu_i-\bmu_i^*)\cr
&-2\beta_0(t+1)(-{\gb}(\bmu), \bla-\bla^*)\cr
&-2\sum_{i=1}^N\beta_i(t+1)(\Qb_i(\be), \bmu_i-\bmu_i^*)\cr
&-2\beta_0(t+1)(\Qb_{N+1}(\be), \bla-\bla^*)\cr
&+\sum_{i=1}^N\beta_i^2(t+1)\E\{\|\Gb_{i}(\xb(t),\be(t))\|^2|\be(t)=\be\}\cr
&+\beta_0^2(t+1)\E\{\|\Gb_{N+1}(\xb(t),\be(t))\|^2|\be(t)=\be\}.
% \le&- 2\beta(t+1)(\Mb^0(\be), \be-\be^*)\cr
% &+2\beta(t+1)\|\Qb(\be)\|\|\be-\be^*\|\cr
% &+ \beta^2(t+1)\E_{\xb(t)}\{\|\Gb(\xb(t),\be)\|^2\}.
\end{align}
% Here, according to the definition of $\E_{\xb(t)}\{\cdot\}$,
% $$\E_{\xb(t)}\{\|\Gb(\xb(t),\be)\|^2\}=\E\{\|\Gb(\xb(t),\be(t))\|^2|\be(t)=\be\}.$$
Now, we bound the first two terms in the last expression above.
\begin{align}\label{eq:terms1}
& -2\sum_{i=1}^N\beta_i(t+1)(\Mb_i^0(\be), \bmu_i-\bmu_i^*)\cr
&\qquad\qquad\qquad-2\beta_0(t+1)(-{\gb}(\bmu), \bla-\bla^*)\cr
&=-2\bemin(t+1)(\Mb^0(\be),\be-\be^*)\cr
&\qquad\qquad\qquad+2\bemin(t+1)(\Mb^0(\be),\be-\be^*)\cr
&\qquad\qquad\qquad-2\sum_{i=1}^N\beta_i(t+1)(\Mb_i^0(\be), \bmu_i-\bmu_i^*)\cr
&\qquad\qquad\qquad-2\beta_0(t+1)(-{\gb}(\bmu), \bla-\bla^*)\cr
&\le-2\bemin(t+1)(\Mb^0(\be),\be-\be^*)\cr
&\qquad\qquad+2(\bemax(t+1)-\bemin(t+1))\|\Mb^0(\be)\|\|\be-\be^*\|\cr
&\le-2\bemin(t+1)(\Mb^0(\be),\be-\be^*)\cr
&\qquad\qquad+2(\bemax(t+1)-\bemin(t+1))k_1(1+V(\be)),
\end{align}
for some constant $k_1>0$, where the last inequality is due to the linear behavior of the mapping $\Mb^0(\be)$ at infinity (see Assumption\r\ref{assum:inftybeh}). Hence, \eqref{eq:LV} and \eqref{eq:terms1} imply
\begin{align}\label{eq:LV2}
LV(\be)&\le-2\bemin(t+1)(\Mb^0(\be),\be-\be^*)\cr
&+2(\bemax(t+1)-\bemin(t+1))k_1(1+V(\be))\cr
&+2\sum_{i=1}^N\beta_i(t+1)\|\Qb_i(\be)\| \|\bmu_i-\bmu_i^*\|\cr
&+2\beta_0(t+1)\|\Qb_{N+1}(\be)\| \|\bla-\bla^*\|\cr
&+\sum_{i=1}^N\beta_i^2(t+1)\E\{\|\Gb_{i}(\xb(t),\be(t))\|^2|\be(t)=\be\}\cr
&+\beta_0^2(t+1)\E\{\|\Gb_{N+1}(\xb(t),\be(t))\|^2|\be(t)=\be\}.\cr
% \le&- 2\beta(t+1)(\Mb^0(\be), \be-\be^*)\cr
% &+2\beta(t+1)\|\Qb(\be)\|\|\be-\be^*\|\cr
% &+ \beta^2(t+1)\E_{\xb(t)}\{\|\Gb(\xb(t),\be)\|^2\}.
\end{align}

Let us analyze the terms containing $\Qb_i$ for $i \in [N+1]$  in \eqref{eq:LV2}. First, we will show that the mapping $\tilde{\Mb}_i^0 (\be(t))$ (see \eqref{eq:mapp2}) evaluated at $\be(t)$ is equivalent to the extended game mapping (see Definition\r\ref{def:mapping_ext}) in mixed strategies, that is, for $i \in [N+1]$
\begin{align}\label{eq:gradmix}
 \tilde{\Mb}_i^0 (&\be(t))=\int_{\mathbb R^{Nd}}{\Mb_i^0} (\bx,\bla)p(\bmu(t),\bx)d\bx.
\end{align}
Indeed, using the notations
\begin{align*}
&\mu^i_{-k}=(\mu^i_1,\ldots,\mu^{i}_{k-1},\mu^{i}_{k-1},\ldots\mu^{i}_{d})\in\R^{d-1},\\
&x^i_{-k}=(x^i_1,\ldots,x^{i}_{k-1},x^{i}_{k-1},\ldots x^{i}_{d})\in\R^{d-1},\\
&p(\mu^i_{-k},x^i_{-k})=\frac{1}{(\sqrt{2\pi}\sigma_i)^{d-1}}\exp\left\{-\sum_{j\ne k}\frac{(x^i_j-\mu^i_j)^2}{2\sigma_i^2}\right\}\\
&p(\bmu^{-i},\bx^{-i})=\prod_{j\ne i, j=1}^N\frac{1}{(\sqrt{2\pi}\sigma_j)^{d}}\exp\left\{-\sum_{k=1}^d\frac{(x^j_k-\mu^j_k)^2}{2\sigma_j^2}\right\},
\end{align*}
we can show that for any $i\in[N]$, $k\in[d]$, $\tilde{M^0}_{i,k}(\be)$
\begin{align}\label{eq:diffpotfun}
\allowdisplaybreaks
&\tilde{M^0}_{i,k}(\be)=\tilde{M^0}_{i,k}(\bmu,\bla) \cr
&= \frac{1}{\sigma_i^2}\int_{\R^{Nd}}J^0_i(\bx,\bla)(x^i_k - \mu^i_k)p(\bmu,\bx)d\bx\cr
&= -\int_{\R^{Nd}}J^0_i(\bx,\bla) p(\mu^i_{-k},x^i_{-k}) p(\bmu^{-i},\bx^{-i})\frac{1}{\sqrt{2\pi}\sigma_i} \cr
&\qquad\qquad\qquad\qquad\qquad\qquad\qquad \times d\left(e^{-\frac{(x^i_k-\mu^i_k)^2}{2\sigma_i^2}}\right) d\bx^{-i}\cr
 &=-\int_{\R^{Nd-1}}\left(J^0_i(\bx,\bla)e^{-\frac{(x_k^i-\mu_k^i)^2}{2\sigma_i^2}}\right)\bigg|_{-\infty(x_k^i)}^{\infty(x_k^i)}\cr
&\qquad\qquad\qquad \times p(\mu^i_{-k},x^i_{-k}) p(\bmu^{-i},\bx^{-i})\frac{1}{\sqrt{2\pi}\sigma_i}d\bx^{-i}\cr
 &\qquad\qquad\qquad\qquad+\int_{\R^{Nd}}\frac{\partial J^0_i(\bx,\bla)}{\partial x^i_k}p(\bmu,\bx)d\bx\cr
 &=\int_{\R^{Nd}}\frac{\partial J^0_i(\bx,\bla)}{\partial x^i_k}p(\bmu,\bx)d\bx.
 \end{align}
The above holds since according to Assumption~\ref{assum:inftybeh},
\begin{align*}
\lim_{x_k^i\to\infty(-\infty)}J^0_i(\bx,\bla)e^{-\frac{(x_k^i-\mu_k^i)^2}{2\sigma_i^2}}=0,
\end{align*}
 for any fixed $\mu_k^i$, $\bx^{-i}$, and $\bla$.
 Thus, \eqref{eq:gradmix} holds for each regular player $i \in [N]$.
% \tilde{\Mb}_i^0 (&\be(t))=\int_{\mathbb R^{Nd}}{\Mb_i^0} (\bx,\bla)p(\bmu(t),\bx)d\bx.
 Moreover, for the dual player
 %using again Assumption~\ref{assum:CG_grad} and Remark\r\ref{rem:fun_g}, we can similarly demonstrate that
 \begin{align}\label{eq:diffpotfun1}
\tilde {\boldsymbol M}^0_{N+1}(\be)&=\int_{\R^{Nd}}\nabla_{\bla}{J^0_{N+1}(\bx,\bla)}p(\bmu,\bx)d\bx\cr
&=-\int_{\R^{Nd}}\gb(\bx)p(\bmu,\bx)d\bx.
%&= -\int_{\R^{Nd}}M_{i,k}(\bx) p_{\mu^i_{-k}}(x^i_{-k}) p_{\bmu^{-i}}(\bx^{-i}) d\left(e^{-\frac{(x^i_k-\mu^i_k)^2}{2\sigma^2}}\right) d\bx^{-i}\cr
% &=-\int_{\R^{N-1}}\left(M_{i,k}(\bx)e^{-\frac{(x^i-\mu^i)^2}{2\sigma^2}}\right)\bigg|_{-\infty}^{\infty}p_{\bmu^{-i}}(\bx^{-i})d\bx^{-i}\cr
% &+\int_{\R^N}\frac{\partial M_{i,k}(\bx)}{\partial x^i_k}p_{\bmu}(\bx)d\bx\cr
 \end{align}
%where $p_{\bmu}(\bx) = \prod_{i=1}^N p_{\mu^i}(x^i)$, $p_{\mu^i}=\frac{1}{\sqrt{2\pi}\sigma}\exp\left\{-\frac{(x-\mu^i)^2}{2\sigma^2}\right\}$, and $p_{\bmu^{-i}}(\bx^{-i}) = \prod_{j\ne i} p_{\mu^j}(x^j)$.
Since $\Qb_i(\be(t))=\tilde{\Mb}_i^0(\be(t)) - \Mb_i^0(\be(t))$ and due to Assumption~\ref{assum:CG_grad} and equation \eqref{eq:gradmix}, we obtain the following:
\begin{align}\label{eq:Qterm1}
 \|\Qb_i(\be)\|&=\|\int_{\R^{Nd}}[\Mb_i^0(\bx,\bla)-\Mb_i^0(\bmu,\bla)]p(\bmu,\bx)d\bx\| \cr
 &\le \int_{\R^{Nd}}\|\Mb_i^0(\bx,\bla) - \Mb_i^0(\bmu,\bla)\| p(\bmu,\bx) d\bx\cr
 &\le \int_{\R^{Nd}} L_i(\bla) \|\bx - \bmu\| p(\bmu,\bx) d\bx \cr
 \nonumber &\le \int_{\R^{Nd}} L_i(\bla) \left(\sum_{i=1}^{N}\sum_{k=1}^{d}|x^i_k - \mu^i_k|\right) p(\bmu,\bx) d\bx\\
 &= O(\sum_{i=1}^{N}\sigma_i)(1+\|\be-\be^*\|),
 \end{align}
where the last equality is due to the fact that the first central absolute moment of a random variable with a normal distribution $\EuScript N(\mu,\sigma)$ is $O(\sigma)$ and $L_i(\bla)$ is a linear function of $\bla$ (see Assumption~\ref{assum:CG_grad}) and, hence, $L_i(\bla)\le k(1+\|\be-\be^*\|)$ for some constant $k$.
Thus,
\begin{align}\label{eq:Qterm2}
\|\Qb_i(\be)\|\|\bmu_i-\bmu_i^*\|\le O(\sum_{i=1}^{N}\sigma_i)(1+V(\be)).
\end{align}

Similarly, using Assumption~\ref{assum:CG_grad} and equality\r\eqref{eq:diffpotfun1}, we have
\begin{align}\label{eq:Qterm4}
 & \|\Qb_{N+1}(\be)\| = \|\tilde {\boldsymbol M}^0_{N+1}(\be) + {\gb}(\bmu)\| \le O(\sum_{i=1}^N\sigma_i),\cr
& \|\Qb_{N+1}(\be)\|\|\bla-\bla^*\|\le O(\sum_{i=1}^N\sigma_i)(1+V(\be)).
\end{align}

Finally, we bound the last two terms in \eqref{eq:LV2}.
Since $\E(\xi-\E\xi)^2\le\E\xi^2$ and taking into account \eqref{eq:mathexp2}, we have \begin{align}\label{eq:Rineq1}
  &\E\{\|\Rb_i(\xb(t),\be(t))\|^2|\be(t)=\be\}\cr
  &\le\sum_{k=1}^{d}\int_{\mathbb R^{Nd}}{J_i^0}^2(\bx,\bla)\frac{(x^i_k - \mu^i_k)^2}{\sigma_i^4(t)} p(\bmu,\bx)d\bx.
  \end{align}
Thus, we can use Assumption~\ref{assum:inftybeh}, Remark\r\ref{rem:fun_g}, and the fact that $J_i^0(\bx,\bla)$ is affine in $\bla$ to get the next inequality:
 \begin{align}\label{eq:Rineq}
  \E\{\|\Rb_i(\xb(t),&\be(t))\|^2|\be(t)=\be\}\cr
  &\le f(\bmu, \bsigma(t))\left(\frac{1}{\sigma_i^4(t)}+k_2 V(\be)\right),
 \end{align}
 where $f(\bmu, \bsigma(t))$ is a polynomial of $\mu_i$ and $\sigma_i(t)$, $i\in[N]$, and $k_2$ is some positive constant.

Furthermore, taking into account boundedness of $\bmu(t)$ and affinity of the mapping $\Mb^0(\be)$ with respect to $\bla$, we obtain the following bound for any $i\in[N]$:
\begin{align}\label{eq:Gterm1}
&\beta_i^2(t+1)\E\{\|\Gb_i(\xb(t),\be(t))\|^2|\be(t)=\be\}\cr
&\le\beta_i^2(t+1)(\|\Mb_i^0(\be)\|^2 + \|\Qb_i(\be(t))\|^2) \cr
&\qquad\qquad+2\beta_i^2(t+1)\|\Mb_i^0(\be)\|\|\Qb_i(\be)\|\cr
&\qquad\qquad+\beta_i^2(t+1)(\E\{\|\Rb_i(\xb(t),\be(t))\|^2|\be(t)=\be\})\cr
&\le\beta_i^2(t+1)(k_3+k_4\|\bla\|^2+O(\sigma_i^2(t))(1+\|\be-\be^*\|)^2)\cr
&\quad\quad\quad+2\beta_i^2(t+1)(k_5+k_6\|\bla\|)O(\sigma_i(t)(1+\|\be-\be^*\|))\cr
&\qquad\qquad\qquad\quad+O(\gamma_i^2(t+1))+O(\beta_i^2(t+1))V(\be)\cr
&\le O(\beta_i^2(t+1)(1+\sigma_i^2(t)+\sigma_i(t))+\gamma_i^2(t+1))\cr
&\qquad\qquad\qquad\qquad\quad\qquad\qquad\qquad\times(1+V(\be)),
\end{align}
where $k_j$, $j=3,\ldots,6$, are some positive constants.

For the term $\E\{\|\Gb_{N+1}(\xb(t),\be(t))\|^2|\be(t)=\be\}$ we can first derive the following bound: \begin{align*}
  \E\{\|&\Rb_{N+1}(\xb(t),\be(t))\|^2|\be(t)=\be\}\cr
  &\le\int_{\mathbb R^{Nd}}\|\gb(\bx)\|^2 p(\bmu,\bx)d\bx=\phi(\bmu,\bsigma(t)),
 \end{align*}
where $\phi(\bmu, \bsigma(t))$ is a polynomial of $\mu_i$ and $\sigma_i(t)$, $i\in[N]$ (see Remark\r\ref{rem:fun_g}). Hence, according to boundedness of $\bmu(t)$ and the fact that $\sigma(t)$ goes to zero, we obtain
\begin{align}\label{eq:Gterm2}
&\beta_0^2(t+1)\E\{\|\Gb_{N+1}(\be(t))\|^2|\be(t)=\be\}\cr
&\le\beta_0^2(t+1)(\|\gb(\bmu)\|^2 + \|\Qb_{N+1}(\be(t))\|^2) \cr
&\qquad\qquad+2\beta_0^2(t+1)\|\gb(\bmu)\|\|\Qb_{N+1}(\be)\|\cr
&\qquad\qquad+\beta_0^2(t+1)(\E\{\|\Rb_{N+1}(\be(t))\|^2|\be(t)=\be\})\cr
&\le \beta_0^2(t+1)(k_7+O(\sigmax^2(t)) + O(\sigmax(t))),
\end{align}
where $k_7$ is some positive constant.

Since $\be^*$ is a NE in $\Gamma_{a}(\Ab\times\R^n_{+})$, Assertion 1) in Theorem\r\ref{th:exist_uncoupled} implies that
\[(\Mb^0(\be^*),\be-\be^*)\ge 0,\]
for any $\be\in\Ab\times\R^n_{+}$.
According to pseudo-monotonicity of $\Mb^0$ in Assumption\r\ref{assum:CG_grad}, the inequality above implies
\begin{align}\label{eq:function1}
 (\Mb^0(\be),\be-\be^*)\ge 0\mbox{ for any $\be\in\Ab\times\R^n_{+}$}.
\end{align}
Thus, bringing \eqref{eq:LV2}, \eqref{eq:Qterm2}, \eqref{eq:Qterm4}, \eqref{eq:Gterm1}, and \eqref{eq:Gterm2} together, we get
\begin{align}\label{eq:LV1}
LV(\be)\le&- 2\bemin(t+1)(\Mb^0(\be), \be-\be^*)\cr
&+p(t)(1+V(\be)),\cr
p(t)=&O(\bemax(t+1)-\bemin(t+1))\cr
&+O(\bemax(t+1)\sigmax(t)+\gamax^2(t+1)).
\end{align}
Hence, using conditions on parameters in Assumption\r\ref{assum:timestep}, we get $\sum_{t=0}^{\infty}p(t)<\infty$. Finally, taking into account \eqref{eq:function1}, \eqref{eq:LV1}, $\sum_{t=0}^{\infty}\bemin(t)=\infty$, and Theorem\r\ref{th:finiteness}, we conclude that $\be(t)$ is finite almost surely for any $t\in\Z_{+}$ during the run of the algorithm irrespective of $\be(0)$.
\end{proof}

\subsection{Convergence of the Algorithm Iterates}
Having established boundedness of the iterates in the algorithm \eqref{eq:regpl}-\eqref{eq:dualpl}, in this subsection we prove Theorem~\ref{th:main}. 
\begin{proof}[Proof of Theorem\r\ref{th:main}]
Under Assumption~\ref{assum:potential}, the game can be reformulated as a constrained  optimization problem. Indeed, if the game $\Gamma$ is potential with a strictly convex potential function $f(\ab)$, then 
$\Mb(\ab) = \nabla f(\ab)$ and the problem of finding a variational equilibrium  is equivalent to solving 
\begin{align}\label{eq:problem}
 \mbox{minimize} \mbox{ }  & f(\ab) \cr
 \mbox{subject to} \mbox{ } & g_j(\ab) \leq 0, \quad j=1,\ldots,m \cr
                   & \ab\in \Ab = A_1\times\ldots\times A_N\subseteq \R^{Nn}.
\end{align}
This equivalence follows from the fact that, due to the definition of potential games, the unique minimum solving the problem above is a variational  Nash equilibrium in the game, whereas strict monotonicity of the game mapping $\Mb$ implies uniqueness of the solution of $VI(\EQ,\Mb)$ (see, for example, \cite{FaccPang1}) and, thus, uniqueness of the variational generalized Nash equilibrium. We call the above the primal problem. 

Consider the Lagrangian function for the primal problem 
\begin{align}\label{eq:Lagr}
 \Lag(\ab,\bla) = f(\ab) + (\bla,\gb(\ab)).
\end{align}
Then the dual function is defined as $\sup_{\bla\in\R_+^{n}}\inf_{\ab\in \Ab}\Lag(\ab,\bla)$. 
%\[q(\bla)=\inf_{\ab\in \Ab}\Lag(\ab,\bla)=\inf_{\ab\in \Ab} \left\{f(\ab) + (\bla,\gb(\ab))\right\},\]
%and the dual problem is 
%\begin{align}\label{eq:dproblem}
% \mbox{maximize} \mbox{ }  & q(\bla) \cr
% \mbox{subject to} \mbox{ } & \bla\in\R_+^{n}.
%\end{align}
Given Assumption~\ref{assum:potential}, we conclude that for any primal-dual optimal pair $(\ab^*, \bla^*)$ the vector $\ab^*$ is the unique (due to strict monotonicity) variational Nash equilibrium in the game $\Gamma$. Under Assumption~\ref{assum:Slaters} the Karush-Kuhn-Tucker (KKT) \cite{KKT} conditions hold and consequently,
 $(\ab^*, \bla^*)$ is  a primal-dual optimal pair if and only if:
 
 (1) $\ab^*$ is primal feasible and $\bla^*\in\R_+^{n}$ ($\bla^*$ is dual feasible);
 
 (2) $\ab^*$ attains the minimum in $\inf_{\ab\in\Ab}\Lag(\ab,\bla^*)$; 
 
 (3) $\bla^*$ attains the maximum in $\sup_{\bla\in\R_+^{n}}\Lag(\ab^*,\bla)$.
% Let $\be^* = (\bmu^*, \bla^*)$ be a primal-dual optimal pair to the problems \eqref{eq:problem} and \eqref{eq:dproblem}. Then, as it was discussed in the subsection~\ref{subsec:problem_opt}, under Assumption~\ref{assum:potential}, the vector $\bmu^*$ is the unique generalized Nash equilibrium in the game $\Gamma$.
% Next we will demonstrate that the iterations $\bmu(t)$ converge almost surely to $\bmu^*$.  Our approach below is to use the characterization above, along with the estimation in  \eqref{eq:LV1}. 
Using the above characterization with the bounds derived in the previous section, we can establish convergence of the algorithm.  First, from Inequality  \eqref{eq:LV1} we obtain that for any Nash equilibrium $\be^*$ 
\begin{align}\label{eq:dist1}
\E&\{\|\be(t+1)-\be^*\|^2|\EuScript F_t\}\le\|\be(t)  - \be^*\|^2 \cr
-&2 \bemin(t)(\Mb^0(\be(t)),\be(t)  - \be^*) +h(t),
\end{align}
where $\EuScript F_t$ is the $\sigma$-algebra generated by the random variables $\{\be(k), k \le t\}$ and $h(t) = p(t)(1+V(\be(t)))$. Due to Lemma~\ref{lem:boundedvec}, $\be(t)$ is bounded almost surely and, thus, according to the estimations for $p(t)$ in the proof of Lemma~\ref{lem:boundedvec}, $\sum_{t=0}^{\infty}h(t)  < \infty$. Second, we will bound the term $(\Mb^0(\be(t)),\be(t)  - \be^*) $ using the Lagrangian of the game  $\Lag(\ab,\bla)$. 

Due to definition of the mapping $\Mb^0$ and Assumption~\ref{assum:potential}, 
\begin{align*}
 (\Mb^0(\be(t)),\be(t)  - \be^*) = &(\nabla_{\bmu}\Lag(\bmu(t),\bla(t)),\bmu(t)-\bmu^*) \cr
 & -(\nabla_{\bla}\Lag(\bmu(t),\bla(t)),\bla(t)-\bla^*),
\end{align*}
where $\Lag(\bmu,\bla)$ is the Lagrangian function defined in \eqref{eq:Lagr}.
Due to convexity of $f$ and $g_1,\ldots, g_m$, according to KKT optimal conditions above, we get for any $\bmu\in \Ab$ and $\bla\in\R^n_{+}$
\begin{align}\label{eq:ineq1}
\Lag(\bmu,\bla^*)-\Lag(\bmu^*,\bla^*)\ge 0,
\end{align}
 \begin{align}\label{eq:ineq2}
 \Lag(\bmu^*,\bla^*)-\Lag(\bmu^*,\bla)\ge 0.
\end{align}
Due to convexity of $\Lag(\bmu,\bla)$ in $\bmu$ for any $\bla\in\R^n_{+}$ 
 \begin{align}\label{eq:dot1}
(&\nabla_{\bmu}\Lag(\bmu(t),\bla(t)),\bmu(t)-\bmu^*)  \ge \Lag(\bmu(t),\bla(t))-\Lag(\bmu^*,\bla(t))
% =& \sum_{i=1}^m(\mu_i(t)-\mu_i^*)(\nabla g_i(\bmu(t)),\bmu(t) - \bmu^*)+(\nabla_{\bmu}\Lag(\bmu(t),\bla^*) ,\bmu(t)-\bmu^*)\cr
% \ge&\sum_{i=1}^m(\mu_i(t)-\mu_i^*)(g_i(\bmu(t)) -g_i(\bmu^*)) + \Lag(\bmu(t),\bla^*)-\Lag(\bmu^*,\bla^*),
\end{align}
and due to linearity of $\Lag(\bmu,\bla)$ in $\bla$, we obtain
\begin{align}\label{eq:dot2}
 (&\nabla_{\bla}\Lag(\bmu(t),\bla(t)),\bla(t)-\bla^*) = \Lag(\bmu(t),\bla(t))-\Lag(\bmu(t),\bla^*).
% =& (-\bg(\bmu(t)) + \bg(\bmu^*), \bla(t) - \bla^*)+(\nabla_{\bla}\Lag(\bmu^*,\bla(t)),\bla^*-\bla(t))\cr
% =&\sum_{i=1}^m(\mu_i(t)-\mu_i^*)(g_i(\bmu^*)-g_i(\bmu(t)) ) + ,
 \end{align}
Bringing \eqref{eq:dot1} and \eqref{eq:dot2} together and taking into account \eqref{eq:ineq1}, \eqref{eq:ineq2}, adding and subtracting $\Lag(\bmu^*,\bla^*)$ we conclude that
\begin{align}
\label{eq:dist2}
\E&\{\|\be(t+1)-\be^*\|^2|\EuScript F_t\}
\le\|\be(t)  - \be^*\|^2 -2 \bemin(t)\\
\nonumber
&\times[\Lag(\bmu(t),\bla^*)-\Lag(\bmu^*,\bla^*) + \Lag(\bmu^*,\bla^*)-\Lag(\bmu^*,\bla(t))] \cr
&\qquad\qquad\qquad+h(t).
\end{align}
Thus, we can apply the result of Robbins and Siegmund (Lemma~\ref{th:th_nonnegrv}) to the inequality \eqref{eq:dist2} to conclude that almost surely

1) $\|\be(t+1)-\be^*\|$ converges,

2) $\sum_{t=0}^{\infty}\bemin(t)(\Lag(\bmu(t),\bla^*)-\Lag(\bmu^*,\bla^*) + \Lag(\bmu^*,\bla^*)-\Lag(\bmu^*,\bla(t)))<\infty.$

As $\sum_{t=0}^{\infty}\bemin(t)=\infty$, 2) implies that there exists a  subsequence $\be(t_l) =(\bmu(t_l),\bla(t_l))$ such that almost surely
\begin{align*}
 \lim_{l\to\infty}\{[&\Lag(\bmu(t_l),\bla^*)-\Lag(\bmu^*,\bla^*)] \cr
 +& [\Lag(\bmu^*,\bla^*) - \Lag(\bmu^*,\bla(t_l))]\}=0.
\end{align*}
Since $\Lag(\bmu(t_l),\bla^*)-\Lag(\bmu^*,\bla^*)\ge0$ and $\Lag(\bmu^*,\bla^*) - \Lag(\bmu^*,\bla(t_l))\ge 0$ for any $l$, the above holds if and only if
\begin{align*}
\lim_{l\to\infty}\Lag(\bmu(t_l),\bla^*)=\Lag(\bmu^*,\bla^*) \\
\lim_{l\to\infty}\Lag(\bmu^*,\bla(t_l)) = \Lag(\bmu^*,\bla^*).
\end{align*}
As a subsequence of $\{\be(t)\}$, the sequence $\be(t_l) =(\bmu(t_l),\bla(t_l))$ is bounded almost surely. Hence, we can choose an almost surely convergent subsequence $\be(t_{l_s}) =(\bmu(t_{l_s}),\bla(t_{l_s}))$ such that $\lim_{s\to\infty}\be(t_{l_s}) = \hat \be = (\hat{\bmu},\hat{\bla})$  with probability 1, where $\hat{\bmu}\in\Ab$ and  $\hat{\bla}\in\R^n_{+}$ (since $\Ab$ and $\{\bla: \, \bla\in\R^n_{+}\}$ are closed). Due to the last two equalities above, almost surely
\begin{align*}
&\lim_{s\to\infty}\Lag(\bmu(t_{l_s}),\bla^*) = \Lag(\hat{\bmu},\bla^*)= \Lag(\bmu^*,\bla^*)\\
&\lim_{s\to\infty}\Lag(\bmu^*,\bla(t_{l_s}))= \Lag(\bmu^*,\hat{\bla}) = \Lag(\bmu^*,\bla^*).
\end{align*}
Since $f$ is strictly convex and $g_i$, $i=1, \dots, m$ are convex over $\Ab$, the equality  $\Lag(\hat{\bmu},\bla^*)= \Lag(\bmu^*,\bla^*) = \min_{\bmu\in \Ab} \Lag(\bmu,\bla^*)$ implies that $\hat \bmu = \bmu^*$. 
Moreover, due to dual feasibility of $\hat \bla$, the equality $\hat \bmu = \bmu^*$ together with the equality  $\Lag(\bmu^*,\hat{\bla}) = \Lag(\bmu^*,\bla^*) = \max_{\bla\in\R^n_{+}}\Lag(\bmu^*,\bla)$ imply that  $\hat \be = (\bmu^*,\hat{\bla})$ is an optimal primal dual pair and hence, a Nash equilibrium of the game. Since assertion 1) above holds for any Nash equilibrium $\be^*$, replacing $\be^*$ by $\hat {\be}$ in \eqref{eq:dist2}, we conclude that $\|\be(t+1)-\hat \be\|$  converges. Since there exists a subsequence $\be(t_{l_s})$ which converges to $\hat \be$, we get that almost surely
$\lim_{t\to\infty}\be(t) = \hat\be.$
Hence, $\Pr\{\lim_{t\to\infty}\bmu(t)= \bmu^*\}=1$.

Finally,
Assumption\r\ref{assum:timestep} implies that $\lim_{t\to\infty}\sigma_i(t)=0$ for all $i\in[N]$.
Taking into account that $\xb(t)\sim\EuScript N(\bmu(t),\bsigma(t))$, we conclude that $\xb(t)$ converges weakly to a Nash equilibrium $\ab^*=\bmu^*$ as time runs. Moreover, according to  Portmanteau Lemma \cite{portlem}, this convergence is also in probability.
\end{proof}
\section{Convergence with only local constraints}\label{sec:rate}
\subsection{Convergence to Nash equilibria under relaxed conditions}
We consider the game without coupling constraints, namely $C = \R^{Nd}$ (or equivalently $\gb \equiv 0$), and relax the assumption 
existence of a potential function in the game $\Gamma$. In particular, Assumptions~\ref{assum:convex} and \ref{assum:potential} are replaced by the following one.
 \begin{assumption}\label{assum:convex1}
 For all $i\in[N]$ the set $A_i$ is convex and compact, the cost function $J_i(\ab^i, \ab^{-i})$ is defined on $\R^{Nd}$, continuously differentiable in $\ab$ and the game mapping $\Mb$ is strictly monotone.
\end{assumption}

In this case, as $C = \R^{Nd}$, the payoff-based procedure \eqref{eq:regpl}-\eqref{eq:dualpl} is modified as follows: 
\begin{align}
\label{eq:nocoupling}
 &\bmu^i (t+1)=\\
 \nonumber
 & \Proj_{A_i}\left[\bmu^i(t) -\gamma_i(t+1)\sigma_i^2(t+1){\hat J_i(t)}\frac{{\xb^i(t)} -\bmu^i(t)}{\sigma_i^2(t)} \right],
 \end{align}
where $i\in[N]$, $\gamma_i(t+1)$ is a step-size parameter chosen by  player $i$ and $\hat J_i(t) = J_i(\xb(t))$ is the current observation of the $i$th player's cost function in the game $\Gamma$.

We make the following assumption regarding parameters $\gamma_i(t)$, $\sigma_i(t)$, and $\beta_i(t) = \gamma_i(t)\sigma^2_i(t)$ in the procedure \eqref{eq:nocoupling}.
\begin{assumption}\label{assum:timestep1}
The variance parameters $\sigma_i(t)$ and the step-size parameters $\gamma_i(t)$, $i\in[N]$, are chosen such that

 1) $\sum_{t=0}^{\infty}\bemin(t)=\infty$,

 2) $\sum_{t=0}^{\infty}\bemax(t)-\bemin(t)<\infty,$

 3) $\sum_{t=0}^{\infty}\gamax^2(t)\sigmax^2(t) < \infty$, $\sum_{t=0}^{\infty}\gamax(t)\sigmax^3(t) < \infty$.

 \end{assumption}
 
\begin{theorem}\label{th:main2}
 Let Assumptions\r\ref{assum:CG_grad}-\ref{assum:inftybeh}, \ref{assum:convex1}, and \ref{assum:timestep1} hold in a game $\Gamma(N, \{A_i\}, \{J_i\}, \R^{nN})$. 
  Let the players choose the states $\{\xb^i(t)\}$ at time $t$ according to the normal distribution $\EuScript N(\bmu^i(t),\sigma_i(t))$, where the mean parameters are updated as in \eqref{eq:nocoupling}.  Then, as $t\to\infty$, the mean vector $\bmu(t)$ converges almost surely to the Nash equilibrium $\bmu^*=\ab^*$ of the game $\Gamma$,  given any initial vector $\bmu(0)$, and the joint state $\xb(t)$ converges in probability to $\ab^*$.
\end{theorem}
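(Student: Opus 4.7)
In the absence of coupling constraints the dual player disappears, so the iterate reduces to $\bmu(t)\in\Ab$, which is automatically bounded by compactness of $\Ab$ and the projection in \eqref{eq:nocoupling}. Since $\Ab$ is compact and $\Mb$ is continuous, $SOL(\Ab,\Mb)$ is nonempty; strict monotonicity of $\Mb$ will force this solution to be unique, call it $\bmu^*$, and it coincides with the unique Nash equilibrium of $\Gamma$. The plan is to (a) derive a Robbins--Monro style supermartingale inequality for $V(\bmu)=\|\bmu-\bmu^*\|^2$; (b) apply the Robbins--Siegmund lemma to extract both convergence of $\|\bmu(t)-\bmu^*\|$ and a summability statement on the inner product $(\Mb(\bmu(t)),\bmu(t)-\bmu^*)$; and (c) use strict monotonicity to identify the subsequential limit with $\bmu^*$ and close the argument.

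\textbf{The recursion.} I would repeat the Lyapunov calculation of Lemma~\ref{lem:boundedvec}, but restricted to the primal block and with the ordinary game mapping $\Mb$ in place of the extended $\Mb^0$; this should yield
\[
\E\{V(\bmu(t+1))\,|\,\EuScript F_t\} \le V(\bmu(t)) - 2\bemin(t+1)\bigl(\Mb(\bmu(t)),\bmu(t)-\bmu^*\bigr) + p(t),
\]
with remainder $p(t) = O(\bemax(t+1)-\bemin(t+1)) + O(\bemax(t+1)\sigmax(t)) + O(\gamax^2(t+1)\sigmax^2(t))$. Two features should make this cleaner than the estimate in Lemma~\ref{lem:boundedvec}: the Lipschitz constants $L_i$ no longer carry an affine-in-$\bla$ factor, so the bias $\|\Qb_i\|$ is $O(\sigmax(t))$ with a bounded constant; and $|J_i(\bx)|$ is uniformly bounded on the region effectively visited by $\xb(t)$, which improves the second-moment contribution from $O(\gamma_i^2)$ to $O(\gamma_i^2\sigma_i^2)$. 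This is precisely why Assumption~\ref{assum:timestep1} is able to replace $\sum_t \gamax^2(t)<\infty$ by the weaker $\sum_t \gamax^2(t)\sigmax^2(t)<\infty$ while still giving $\sum_t p(t)<\infty$.

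\textbf{Passage to the limit.} Because $\bmu^*$ solves $VI(\Ab,\Mb)$ and strict monotonicity implies pseudo-monotonicity, one has $(\Mb(\bmu(t)),\bmu(t)-\bmu^*)\ge 0$ for every $t$. The Robbins--Siegmund lemma will then give, almost surely, both convergence of $\|\bmu(t)-\bmu^*\|$ and $\sum_t \bemin(t+1)(\Mb(\bmu(t)),\bmu(t)-\bmu^*)<\infty$. Since $\sum_t \bemin(t)=\infty$, a random subsequence must make the inner product tend to zero; compactness of $\Ab$ and continuity of $\Mb$ then extract a further subsequence $\bmu(t_{k_l})\to\hat\bmu$ satisfying $(\Mb(\hat\bmu),\hat\bmu-\bmu^*)=0$. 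Combined with $(\Mb(\bmu^*),\hat\bmu-\bmu^*)\ge 0$ this gives $(\Mb(\hat\bmu)-\Mb(\bmu^*),\hat\bmu-\bmu^*)\le 0$, and strict monotonicity then forces $\hat\bmu=\bmu^*$. Together with the almost sure convergence of $\|\bmu(t)-\bmu^*\|$ this yields $\bmu(t)\to\bmu^*$ almost surely, and convergence of $\xb(t)\sim\EuScript N(\bmu(t),\sigma_i(t))$ to $\bmu^*$ in probability follows from $\sigma_i(t)\to 0$ and Portmanteau's lemma, exactly as in the closing lines of the proof of Theorem~\ref{th:main}.

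\textbf{Main obstacle.} The only genuinely delicate step is the bookkeeping required to verify $\sum_t p(t)<\infty$ under the looser stepsize conditions of Assumption~\ref{assum:timestep1}; one must track carefully where the affine-in-$\bla$ structure was used in Lemma~\ref{lem:boundedvec} and confirm that removing it really does tighten the second-moment bound by a factor of $\sigmax^2$. Once that estimate is in hand, the identification $\hat\bmu=\bmu^*$ via strict monotonicity is short, and the rest of the argument is essentially the same template as the primal part of the proof of Theorem~\ref{th:main}.
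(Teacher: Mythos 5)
Your proposal is correct and follows essentially the same route as the paper's proof: the same Lyapunov recursion for $\|\bmu(t)-\bmu^*\|^2$ with remainder matching the paper's $s(t)$, the same application of the Robbins--Siegmund lemma, the same subsequence extraction and identification of the limit via strict monotonicity, and the same Portmanteau argument for $\xb(t)$. The only cosmetic difference is your heuristic for the tightened second-moment term: the paper gets the $O(\gamax^2(t)\sigmax^2(t))$ contribution by bounding $\E\{\|\Rb_i\|^2\}\le f(\bmu,\bsigma(t))/\sigma_i^2(t)$ directly from the Gaussian moment computation with $\bmu(t)$ confined to the compact set $\Ab$ (rather than ``$|J_i|$ bounded on the effectively visited region''), which is exactly the careful bookkeeping you flag as the delicate step.
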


\begin{proof}
 As before, let $V(\bmu) = \|\bmu - \bmu^*\|^2$, where $\bmu^*$ is the unique Nash equilibrium in the game $\Gamma$ (existence and uniqueness of $\bmu^*$ follows from Proposition 2.3.3 in \cite{FaccPang1}).
 Following the discussion in the proof of Lemma~\ref{lem:boundedvec}, we can rewrite the inequality \eqref{eq:LV2} as follows
 \begin{align}\label{eq:LV_noncoupl}
LV&(\bmu)\le-2\bemin(t+1)(\Mb(\bmu),\bmu-\bmu^*)\\
\nonumber
&+2(\bemax(t+1)-\bemin(t+1))k_1(1+V(\bmu))\\
\nonumber
&+2\sum_{i=1}^N\beta_i(t+1)\|\Qb^0_i(\bmu)\| \|\bmu_i-\bmu_i^*\|\\
\nonumber
&+\sum_{i=1}^N\beta_i^2(t+1)\E\{\|\Gb^0_{i}(\xb(t),\bmu(t))\|^2|\bmu(t)=\bmu\}, \text{where}\\
\label{eq:G_noncoupl}
& \Gb^0_i(\xb(t),\bmu(t)) = \Mb_i(\bmu(t)) +\Qb^0_i(\bmu(t))+\Rb^0_i(\xb(t),\bmu(t)),
\end{align}
and $\Qb^0_i(\bmu(t))$, $\Rb^0_i(\xb(t),\bmu(t))$ are obtain from $\Qb_i(\be(t))$, $\Rb_i(\xb(t),\be(t))$ by letting $\gb \equiv 0$.
Similarly to \eqref{eq:Qterm1} and \eqref{eq:Qterm2},
\begin{align}\label{eq:Qterm1_uncoupl}
 \|\Qb^0_i(\bmu)\|&= O(\sum_{i=1}^{N}\sigma_i)\\
\label{eq:Qterm2_uncoupl}
\|\Qb^0_i(\bmu)\|\|\bmu_i-\bmu_i^*\|&\le O(\sum_{i=1}^{N}\sigma_i)(1+V(\bmu)).
\end{align}
Moreover, similarly to \eqref{eq:Rineq1}, we conclude that
\begin{align}\label{eq:Rineq1_uncouple}
  &\E\{\|\Rb_i(\xb(t),\bmu(t))\|^2|\bmu(t)=\bmu\}\cr
  &\le\sum_{k=1}^{d}\int_{\mathbb R^{Nd}}{J_i}^2(\bx)\frac{(x^i_k - \mu^i_k)^2}{\sigma_i^4(t)} p(\bmu,\bx)d\bx.
  \end{align}
Thus, we can use Assumption~\ref{assum:inftybeh} to get
\begin{align}\label{eq:Rineq_uncoupl}
  \E\{\|\Rb_i(\xb(t),&\bmu(t))\|^2|\bmu(t)=\bmu\}\le \frac{f(\bmu, \bsigma(t))}{\sigma_i^2(t)},
 \end{align}
 where $f(\bmu, \bsigma(t))$ is a quadratic function of $\mu_i$ and polynomial in $\sigma_i(t)$, $i\in[N]$.
 Taking into account \eqref{eq:G_noncoupl}-\eqref{eq:Rineq_uncoupl} and Assumption~\ref{assum:inftybeh}, we conclude that for some positive constants $k_2$, $k_3$
 \begin{align}\label{eq:Gterm1_uncoupl}
&\beta_i^2(t+1)\E\{\|\Gb_i(\xb(t),\bmu(t))\|^2|\bmu(t)=\bmu\}\cr
&\le\beta_i^2(t+1)(\|\Mb_i(\bmu)\|^2 + \|\Qb^0_i(\bmu(t))\|^2) \cr
&\qquad\qquad+2\beta_i^2(t+1)\|\Mb_i(\bmu)\|\|\Qb^0_i(\bmu)\|\cr
&\qquad\qquad+\beta_i^2(t+1)(\E\{\|\Rb_i(\xb(t),\bmu(t))\|^2|\bmu(t)=\bmu\})\cr
&\le\beta_i^2(t+1)(k_2+O(\sigma_i^2(t))(1+\|\bmu-\bmu^*\|)^2)\cr
&\quad\quad\quad+2\beta_i^2(t+1)k_3(1+\|\bmu-\bmu^*\|)O(\sigma_i(t))\cr
&\qquad\qquad\qquad\quad+O(\gamma_i^2(t+1)\sigma_i^2(t+1))(1+V(\bmu))\cr
&\le O(\beta_i^2(t+1)+\gamma_i^2(t+1)\sigma_i^2(t+1))(1+V(\bmu)).
\end{align}
Next, bringing estimations \eqref{eq:Qterm2_uncoupl} and \eqref{eq:Gterm1_uncoupl} into \eqref{eq:LV_noncoupl}, we obtain 
 \begin{align}\label{eq:LVf_noncoupl}
LV(\bmu)\le&-2\bemin(t+1)(\Mb(\bmu),\bmu-\bmu^*) \cr
&+ O(s(t))(1+V(\bmu)),
\end{align}
where $s(t) = \bemax(t)-\bemin(t) + \gamma_{\max}(t)\sigma_{\max}^3(t)+\gamma_{\max}^2(t)\sigma_{\max}^2(t)$. Taking into account that $\Ab$ is compact, we conclude that
 \begin{align}\label{eq:f_noncoupl}
\E\{\|\bmu(t+1)-\bmu^*\|^2&|\EuScript F_t\}\le\|\bmu(t)  - \bmu^*\|^2 \cr
&-2\bemin(t+1)(\Mb(\bmu(t)),\bmu(t)-\bmu^*) \cr
&+ O(s(t)).
\end{align}
Moreover, for any $t$ we have $(\Mb(\bmu(t)),\bmu(t)-\bmu^*)>(\Mb(\bmu^*)),\bmu(t)-\bmu^*)\ge0$, due to \mk{pseudo-}monotonicity of $\Mb$ and the fact that $\bmu^*$ is the Nash equilibrium.
Thus, using Theorem~\ref{th:th_nonnegrv} we conclude that almost surely 

1) $\|\bmu(t+1)-\bmu^*\|$ converges,

2) $\sum_{t=0}^{\infty}\bemin(t)(\Mb(\bmu(t)),\bmu(t)-\bmu^*)<\infty.$

As $\sum_{t=0}^{\infty}\bemin(t)=\infty$, 2) implies that there exists a  subsequence $\bmu(t_l)$ such that almost surely
\begin{align}\label{eq:sub}
 \lim_{l\to\infty}(\Mb(\bmu(t_l)),\bmu(t_l)-\bmu^*)=0.
\end{align}
Since $\bmu(t_l)$ is bounded almost surely for any $l$, we can choose a convergent subsequence $\bmu(t_{l_s})$ such that 
$\lim_{s\to\infty}\bmu(t_{l_s}) = \bmu'$
for some $\bmu'$. Hence, due to \eqref{eq:sub},
\begin{align}
\label{eq:coco}
(\Mb(\bmu'),\bmu'-\bmu^*)=0,
\end{align}
which together with strict monotonicity of $\Mb$ implies $\bmu'=\bmu^*$. 
Thus, as $\|\bmu(t+1)-\bmu^*\|$ converges almost surely and there exists a subsequence $\bmu(t_{l_s})$ which converges to $\bmu^*$ almost surely, we get that $\Pr\{\lim_{t\to\infty}\bmu(t)= \bmu^*\}=1.$
Finally,
Assumption\r\ref{assum:timestep} implies that $\lim_{t\to\infty}\sigma_i(t)=0$ for all $i\in[N]$.
Taking into account that $\xb(t)\sim\EuScript N(\bmu(t),\bsigma(t))$, we conclude that $\xb(t)$ converges weakly to a Nash equilibrium $\ab^*=\bmu^*$ as time runs. Moreover, according to  Portmanteau Lemma \cite{portlem}, this convergence is also in probability.
\end{proof}

\begin{rem} \mk{In a prior work \cite{Tat_ifac2017}, we  showed convergence to Nash equilibria under pseudo-monotonicity of the game mapping, leveraging the proof technique in \cite{ZhuFrazzoli}. Recently,  in \cite{grammatico2018comments} it was  shown with a counterexample  that pseudo-monotonicity is not sufficient for the convergence results in \cite{ZhuFrazzoli}. This implies that our payoff-based algorithm also would not converge to a Nash equilibrium under merely the pseudo-monotonicity assumption. However, a closer look at the required conditions following Equation \eqref{eq:coco} reveals that the proof remains valid if instead of strictly monotone, the game mapping is a) pseudo-monotone and additionally satisfies b) $ \forall \bmu \in \Ab$ and $\bmu^*$ Nash equilibrium, $(\Mb(\bmu'),\bmu'-\bmu^*)=0 \Rightarrow \bmu' = \bmu^*$. This latter condition holds for example, when $\Mb$ is pseudo-monotone and co-coercive. Hence, our new proof method of Theorem \ref{th:main2} corrects  our mistake in \cite{Tat_ifac2017}. }

\mk{The challenge in generalizing the proof of Theorem \ref{th:main} to non-potential games lies in the fact that the extended game mapping $\Mb^0$ cannot be strictly monotone, nor can it be co-coercive. This implies that convergence of a subsequence of $\{ \be(t) \}$ will not suffice to establish convergence of the sequence to a Nash equilibrium. Nevertheless, given a strictly convex potential  function in the game we could use equivalence of the variational Nash equilibrium to an optimal primal dual pair for the Lagrangian, and establish convergence of the sequence of iterates to the variational Nash equilibrium. }
\end{rem}

\subsection{Convergence rate of the algorithm}
%We estimate the convergence rate of the payoff-based procedure \eqref{eq:nocoupling}. This convergence rate will be expressed in terms of the mean-squared error.

Below, we show that if the strict monotonicity condition for the game mapping in Assumption~\ref{assum:convex1} is strengthened to strong monotonicity, we obtain  a convergence rate for the procedure \eqref{eq:nocoupling} as a function of the stepsize and variance parameters. 
\begin{theorem}\label{th:convrate}
 Let Assumptions\r\ref{assum:CG_grad}-\ref{assum:inftybeh}, and \ref{assum:convex1} hold and $\Mb$ be strongly monotone with strong monotonicity constant $\kappa>1$. Furthermore, assume the time step and variance parameters are chosen as $\gamma_i(t)=\frac{1}{(t+R_i)^a}$, $\sigma_i(t)=\frac{1}{(t+R_i)^b}$, $i\in[N]$,  where $a+2b\in(0.5,1]$, $2a>1$, and $a+3b>1$. Then in the long run of the algorithm\r\eqref{eq:pbavmu}-\eqref{eq:pbavlam}
\begin{align}
\label{eq:conv_rate}
\E\{\|\bmu(t)-\bmu^*\|^2\}\le \frac{C}{t^{2(a+b)-1}} = O(1/{t^{2(a+b)-1}}),
\end{align}
 where $C$ is some positive constant and $\bmu^*$ is the unique equilibrium of the game $\Gamma$ to which the vector $\bmu(t)$ converges almost surely.
 \end{theorem}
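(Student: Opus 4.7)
The plan is to reuse the per-step recursion from the proof of Theorem \ref{th:main2} and sharpen it using strong monotonicity. I would start from \eqref{eq:f_noncoupl} and observe that, since $\bmu^*$ is the Nash equilibrium with only local constraints, $(\Mb(\bmu^*), \bmu - \bmu^*) \ge 0$ for every $\bmu\in\Ab$; combined with strong monotonicity this yields $(\Mb(\bmu(t)), \bmu(t) - \bmu^*) \ge \kappa \|\bmu(t) - \bmu^*\|^2$. Taking full expectation then gives the scalar one-step inequality
\[ v_{t+1} \le (1 - 2\kappa\, \bemin(t+1))\, v_t + O(s(t)), \qquad v_t := \E\|\bmu(t) - \bmu^*\|^2. \]

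The second step is to plug in the explicit parameter choices and compute orders. By Taylor expansion $\bemin(t) = \Theta(t^{-(a+2b)})$ and $\bemax(t) - \bemin(t) = O(t^{-(a+2b+1)})$; moreover $\gamax^2(t)\sigmax^2(t) = \Theta(t^{-2(a+b)})$ and $\gamax(t)\sigmax^3(t) = \Theta(t^{-(a+3b)})$. The delicate term is the one coming from $\beta_i\|\Qb^0_i(\bmu(t))\|\|\bmu_i(t) - \bmu_i^*\|$, which naively is of order $t^{-(a+3b)}$. I would control it via a Young-type split
\[ 2\beta_i\|\Qb^0_i\|\,\|\bmu_i - \bmu_i^*\| \;\le\; \epsilon_t\|\bmu_i - \bmu_i^*\|^2 + \epsilon_t^{-1}\beta_i^2\sigma_i^2, \]
choosing $\epsilon_t$ so the first summand is absorbed into the contraction $2\kappa\bemin(t+1)$, leaving only a residual additive error of order $O(t^{-2(a+b)})$ (since $\beta_i^2\sigma_i^2/\bemin = O(\gamma\sigma^2)$ combined with the factor $\bemin$ recovers $\gamax^2\sigmax^2$ up to constants). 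The remaining $\bemax-\bemin$ contribution is $O(t^{-(a+2b+1)})$ and, given $a<1$, is strictly of lower order than $t^{-2(a+b)}$.

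The third step is to apply a Chung/Polyak-type rate lemma, as used in the stochastic projection literature \cite{stochprogr}, to the reduced recursion
\[ v_{t+1} \le (1 - c_1 t^{-(a+2b)})\,v_t + c_2\,t^{-2(a+b)}. \]
The hypothesis $\kappa > 1$ guarantees that the effective contraction constant $c_1$ exceeds the target exponent $2(a+b) - 1$, which is the precise condition under which such a lemma delivers $v_t = O(t^{-(2(a+b)-1)})$. Almost-sure convergence of $\bmu(t)$ to $\bmu^*$ is inherited from Theorem \ref{th:main2}, since strong monotonicity implies strict monotonicity, so the argument only needs to establish the rate on $v_t$.

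The main obstacle will be the Young-inequality bookkeeping on the $\|\Qb^0_i\|\,\|\bmu_i - \bmu_i^*\|$ contribution: the splitting parameter $\epsilon_t$ must be tuned so that (i) the piece absorbed into $v_t$ is strictly dominated by $2\kappa\bemin(t+1)$ for all sufficiently large $t$, and (ii) the remaining additive error is $O(t^{-2(a+b)})$. The interplay between this choice and the Chung-lemma hypothesis $2\kappa > 2(a+b) - 1$ (which holds by $\kappa > 1$ together with $a+2b \le 1$, implying $2(a+b)-1 \le 2a \le 2$) is the point where the assumption $\kappa > 1$ enters critically.
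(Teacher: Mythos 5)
Your overall skeleton is the paper's: start from \eqref{eq:f_noncoupl}, use $(\Mb(\bmu^*),\bmu(t)-\bmu^*)\ge 0$ together with strong monotonicity to turn the cross term into $-2\kappa\bemin(t+1)\|\bmu(t)-\bmu^*\|^2$, take expectations, and finish with a recursion lemma (the paper lower-bounds $\bemin(t)\ge \tfrac{1}{2t}$ using $a+2b\le 1$ and applies its Lemma~\ref{lem:app} with $c=2(a+b)$, which is exactly where $\kappa>1$ enters, giving $C=\max\{\E\|\bmu(0)-\bmu^*\|^2,\psi/(\kappa-1)\}$). The genuine gap is in your step 2. For the Young split $2\beta_i\|\Qb^0_i\|\|\bmu_i-\bmu_i^*\|\le \epsilon_t\|\bmu_i-\bmu_i^*\|^2+\epsilon_t^{-1}\beta_i^2\sigmax^2$ to be absorbed into the contraction you must take $\epsilon_t\lesssim \bemin(t)\sim t^{-(a+2b)}$, and then the residual is $\epsilon_t^{-1}\beta_i^2\sigmax^2\gtrsim \gamax(t)\sigmax^4(t)=\Theta(t^{-(a+4b)})$, not $O(t^{-2(a+b)})$. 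The standing constraints $2a>1$ and $a+2b\le 1$ force $2b<a$, hence $a+4b<2(a+b)$, so this residual strictly dominates $t^{-2(a+b)}$; obtaining a residual of order $t^{-2(a+b)}$ would require $\epsilon_t\gtrsim t^{-4b}$, which is incompatible with absorption. (The parenthetical bookkeeping is the slip: $\beta_i^2\sigmax^2/\bemin\approx\gamax\sigmax^4$, and multiplying back by $\bemin$ gives $\gamax^2\sigmax^6$, not $\gamax^2\sigmax^2$.) Consequently the recursion you feed into the Chung-type lemma, $v_{t+1}\le(1-c_1t^{-(a+2b)})v_t+c_2t^{-2(a+b)}$, is not established; what your route actually yields is at best $v_t=O(t^{-2b})$ (error exponent $a+4b$ minus contraction exponent $a+2b$), which is strictly weaker than the claimed $O(t^{-(2(a+b)-1)})$ precisely because $2a>1$.

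For comparison, the paper does not attempt any such refinement: it keeps the $\beta_i\|\Qb^0_i\|\|\bmu_i-\bmu_i^*\|$ contribution inside $O(s(t))$ (using compactness of $\Ab$), bounds $O(s(t))$ by $\psi/t^{2(a+b)}$, and applies Lemma~\ref{lem:app} with $1<c=2(a+b)\le 2$. You correctly identified that the naive order of this contribution is $\gamax\sigmax^3=t^{-(a+3b)}$, which decays more slowly than $t^{-2(a+b)}$ since $b<a$; keeping it at that order and applying Lemma~\ref{lem:app} with $c=a+3b$ would only give $O(t^{-(a+3b-1)})$. So the delicacy you flag is real, but your proposed repair does not close it, and as written your argument proves a weaker rate than the theorem asserts.
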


 \begin{proof}
 We we will use the following lemma.
\begin{lem}\label{lem:app}
  Let the sequence $\{a_{t}\}$, $a_t\ge 0$ $t\in \Z_{+}$, satisfy the following iteration:
  $$a_{t+1}\le(1-\kappa/t)a_t+\psi/{t^{c}},$$
  for some constants $1<c\le 2$, $\kappa>1$, $\psi>0$.
  Then, $a_{t}\le \frac{C}{t^{c-1}},$
  where $C=\max\{a_0, \frac{\psi}{\kappa-1}\}$.
 \end{lem}
Please see the appendix for the proof. 

Using estimation \eqref{eq:f_noncoupl} and the fact that $(\Mb(\bmu^*), \bmu(t)-\bmu^*)\ge 0$ for any $t$, we obtain
\begin{align}\label{eq:mueast}
  \E\{&\|\bmu(t+1)-\bmu^*\|^2|\EuScript F_t\}\le\|\bmu(t)  - \bmu^*\|^2 \cr
&-2\bemin(t+1)(\Mb(\bmu(t))-\Mb(\bmu^*),\bmu(t)-\bmu^*) \cr
&\qquad\qquad\qquad\qquad+ O(s(t))\cr
&\le\left(1-\frac{\kappa}{t}\right)\|\bmu(t)- \bmu^*\|^2+\frac{\psi}{t^{2(a+b)}},
 \end{align}
 where in the last inequality we used the strong monotonicity of the map $\Mb$ over $\Ab$, conditions for the parameters $\gamma_i(t)$, $\sigma_i(t)$, and definition of $\beta_i$, which implies that there exists $t_0$ such  that $\bemin(t)\ge \frac{1}{2t}$ for any $t\ge t_0$.
Finally, taking into account that $1<2(a+b)\le 2$ and using Lemma\r\ref{lem:app} for $c=2(a+b)$, we conclude that  $\E\{\|\bmu(t)-\bmu^*\|^2\}\le {C}/{t^{2(a+b)-1}}$, where $C=\max\{\E\{\|\bmu(0)-\bmu^*\|^2\}, \frac{\psi}{\kappa-1}\}$.
\end{proof}
The result above demonstrates that the convergence rate of the proposed payoff-based learning procedure is sublinear. This is consistent with the results on related optimization algorithms  based on the stochastic approximation techniques \cite{juditsky2011, TatTouri}. Note also that Theorem\r\ref{th:convrate} presents the asymptotic estimation of the convergence rate, whereas its tightness and more details on characterization of the constant $C$ need to be analyzed separately and are subject of our future work.

\section{Numerical Case Study}\label{sec:simulations}
%We illustrate the proposed payoff-based learning approach through a game arising in the electricity market. The problem setup is motivated by the game theoretic formulation of plug-in-electric vehicle (PEV) charging considered in several previous work including \cite{ma2010decentralized,gan2013optimal,sergio2014tac,paccagnan2016aggregative,dario2016cdcf}. Each PEV user (player or agent) optimize its charging strategy in response to a price signal. The price signal is a function of the total demanded power and hence, the cost functions of each player are coupled. Furthermore, there are limits on total power that can be drawn from the network at each time instant (coupling constraint).  In contrast to past work, we consider a  scenario in which the form of the price function is unknown to agents and there is no communication graph between them. Each agent can only measure  the price of electricity and the penalty for violation of transformer capacity constraint, only for a chosen strategy.

% setup
We illustrate the proposed payoff-based learning approach through a game arising in a classical Cournot economic model. There are $N$ firms, each producing a good and each needs to determine its production amount. Each firm (referred to also as a player or an agent) has an individual production cost $Q^i(\ab^i)$ and  receives a payment $p( \ab) \ab^i$  for the quantity produced $\ab^i$. The price $p(\ab)$ depends on the total production of all firms. The production of the firms is coupled by the fact that there is a network capacity constraint \cite{abolhassani2014network}. Such constraints may arise from the amount that can be delivered through a link to the consumers (consider for example an electricity network with line limits). In contrast to past approaches on computing Nash equilibria, we consider a  scenario in which the form of the price function $p$ is unknown to agents and there is no communication graph between the agents.

% constraints
Let $\ab^i = [{a}^i_1,\ldots, {a}^i_d]^{\top}\in\mathbb R^d$
denote the decision variable of  firm $i$ (also referred to as player or agent), $i\in[N]$, which is its production level over a horizon of $d$ steps\footnote{The formulation here also can be interpreted as production levels of each firm at $d$ different locations \cite{abolhassani2014network}.}.  Each player has a limit on maximum production at each step
\begin{align}\label{eq:constraints}
 0\le &a^i_k\le \bar{a}^i \quad \mbox{for $k = 1,\ldots,d$}.
% \sum_{k=1}^d&a_k^i = \bar{a}^i.
\end{align}
The convex and compact set defined by the constraints above is considered the action set $A_i$ for player $i$. The coupling constraints arising from a  network capacity limit is
\begin{align}\label{eq:coupledconstraints}
 \sum_{i=1}^N&a_k^i \le \bar{a}^k\quad \mbox{for $k = 1,\ldots,d$}.
\end{align}

% cost
For the simulations, we consider a linear price function and quadratic production cost functions, which are standard assumptions in Cournot models \cite{kukushkin1993cournot, hobbs1998lcp,ma2010decentralized,dario2016cdc}.
The  function to be minimized by each agent can then be compactly written as
\begin{align}\label{eq:costs}
 J_i(\ab^i,\ab^{-i}) &= Q^i(\ab^i) - p( \ab) \ab^i \\
 \nonumber
 &=\ab^{i\top}Q^i\ab^i + 2(C\frac{1}{N}\sum_{j=1}^N\ab^j+\cb)^{\top}\ab^i,
\end{align}
with $Q^i, C\in\mathbb R^{d\times d}$, $\cb\in\mathbb R^d$ for all $i\in[N]$.
The production cost is assumed convex and hence $Q^i \in\R^n_{+}$, whereas $C \in\R^n_{+}$ follows from the fact that the price is a decreasing function of total production \cite{hobbs1998lcp,weibull2006price,abolhassani2014network,kukushkin1993cournot,metzler2003nash,FaccPang1}. 
% assumptions
It is readily  verified that the resulting game mapping (see Definition \eqref{eq:gamemapping})  is affine and, hence, Lipschitz on $\R^{Nd}$. \mk{ Moreover, the game mapping is symmetric positive definite and hence, the game admits a strongly convex potential function}. Consequently, Assumptions\r\ref{assum:convex}-\ref{assum:potential} hold.

We let the agents follow the payoff-based algorithm described by \eqref{eq:regpl}-\eqref{eq:dualpl} to find their Nash equilibrium strategies. Each  player  submits  its  proposed  production
profile over time horizon of $d$ units, $\xb^i(t) = [{x}^i_1(t), \ldots, {x}^i_d(t)]^{\top}$ at iteration $t$. It then observes $J^0_i$ consisting of the cost functions corresponding to prices of the good,  the violation of the coupling constraint, as well as its  individual production cost.

\begin{figure}[t!]
\centering
\psfrag{0.5}[c][l]{\tiny{$0.5$}}
\psfrag{1.5}[c][l]{\tiny{$1.5$}}
\psfrag{2.5}[c][l]{\tiny{$2.5$}}
\psfrag{1}[c][c]{\tiny{$1$}}
\psfrag{2}[c][c]{\tiny{$2$}}
\psfrag{0}[c][c]{\tiny{$0$}}
\psfrag{3}[c][c]{\tiny{$3$}}
\psfrag{4}[c][c]{\tiny{$4$}}
\psfrag{5}[c][b]{\tiny{$5$}}
\psfrag{6}[c][b]{\tiny{$6$}}
\psfrag{7}[c][b]{\tiny{$7$}}
\psfrag{100}[c][b]{\tiny{$100$}}
\psfrag{200}[c][b]{\tiny{$200$}}
\psfrag{300}[c][b]{\tiny{$300$}}
\psfrag{50}[c][b]{\tiny{$50$}}
\psfrag{150}[c][b]{\tiny{$150$}}
\psfrag{250}[c][b]{\tiny{$250$}}
\psfrag{t}[c][b]{\footnotesize{}}
\psfrag{A}[c][b]{\footnotesize{}}
\begin{overpic}[width=1\linewidth]{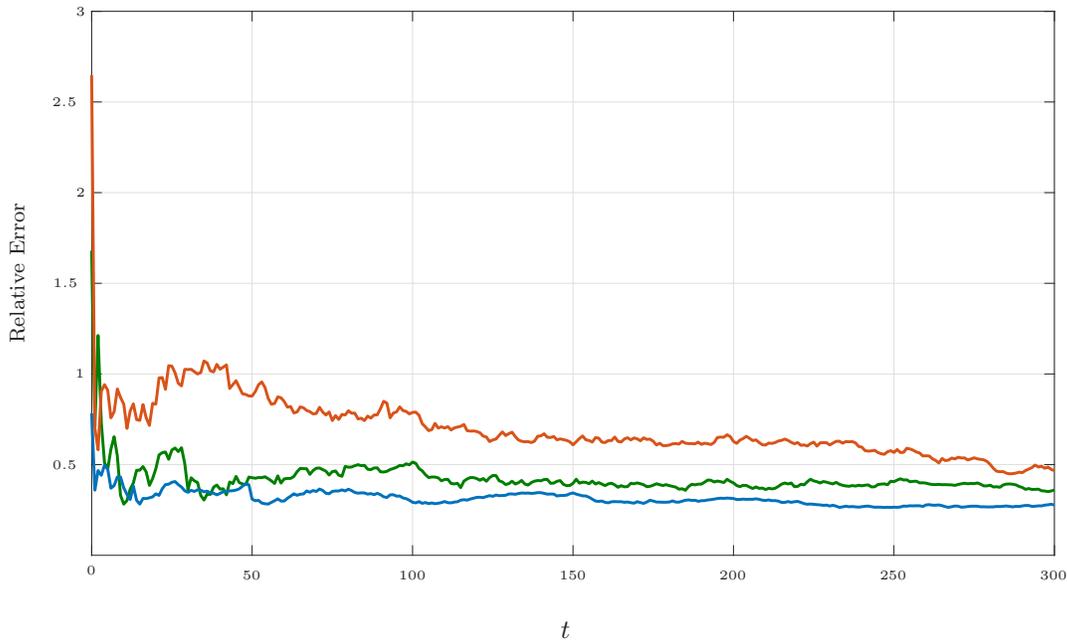}
\put(50.8,-0.8){$t$}
\put(6.4,23){\rotatebox{90}{\footnotesize{Relative Error}}}
\end{overpic}
\caption{Relative error $\frac{\|\bmu(t)-\ab^*\|}{\|\ab^*\|}$ during the payoff-based algorithm, $N=3$ (blue line), $N=10$ (green line), $N=30$ (red line).}
\label{fig:elmark}
\end{figure}

% setup of parameters
For the simulation, we let $d=4$, the matrices $Q^i$, $i\in[N]$ and $C$,  in \eqref{eq:costs} are the identity matrices in $\R^{d
\times d}$, and the vector $\cb \in
\R^d$ is chosen randomly according to a normal distribution. The action set $A_i$ for each player $i\in[N]$ is defined by \eqref{eq:constraints}-\eqref{eq:coupledconstraints}, where $\bar a^i = 9$ and $\bar{a}^k$ is a random variable taking values in the interval $(3N,3N+100)$. {The initial vector $(\bmu(0),\bla(0))$ is chosen from a uniform distribution on $\Ab \times [0,5]$.

% explanation of figures
Figure \ref{fig:elmark} presents the relative error $\frac{\|\bmu(t)-\ab^*\|}{\|\ab^*\|}$ during the algorithm's run for $N=3, 10, 30$, where $\ab^*$ is the unique generalized Nash equilibrium of the game. We  see that after the first iteration the iterates quickly approach the Nash equilibrium.  However, convergence of the error to zero is slow. The slow decrease of the relative error after the first iteration can be explained by the choice of the rapidly decreasing parameter $\sigma(t)$ and $\gamma(t)$. The convergence is also slower for increasing  number of players in the game. It is interesting to derive explicit dependence of the convergence rate derived based on the number of players. 

\section{Conclusion}\label{sec:conclusion}
We proposed a novel payoff-based learning approach for convergence to variational Nash equilibria in convex games with jointly convex coupling constraints. In this approach, each agent determined its next state by sampling from a Gaussian distribution, whose mean was updated using the payoff information. We proved almost sure convergence of the means of the distributions to a variational Nash equilibrium, given appropriate choice of algorithms' step-sizes and  variances of the distributions. The convergence result relied on existence of a strictly convex potential function. In the absence of coupling constraint, convergence to a Nash equilibrium was established based on strict monotonicity of the game mapping. Furthermore, in this case,  under strong monotonicity of the game mapping, the convergence rate of the algorithm was derived. 
Further relaxing  conditions for convergence of the payoff-based algorithm with and without coupling constraint is subject of our current work. We are also  developing algorithms that ensure constraint satisfaction during the algorithm iterates. 

\section{Acknowledgement} 
We thank Sergio Grammatico for informing us of  the recent publication \cite{grammatico2018comments}, in which it was shown that monotonicity is not a sufficient condition for convergence of a class of forward-backward algorithms. This implied a bug in our convergence proof for the payoff-based algorithm proposed in \cite{Tat_ifac2017}. In the current paper,  we corrected this mistake by deriving stronger convergence conditions in Theorem \ref{th:main2}.
%Furthermore, we are working on extending the convergence results to a larger class of games. Future work needs to explore the existence of other payoff-based algorithms with potentially faster convergence rates. Furthermore,  it would be interesting to generalize the results to non-convex games.

\bibliographystyle{plain}
%\bibliography{IEEEabrv,mybibfile}
\bibliography{TAC_ArxivVersion}

\appendix
%We first provide the proofs of Lemma\r\ref{th:exist_uncoupled} and Lemma\r\ref{lem:app}. Then, we present the supporting theorems used in the proof of Lemma\r\ref{lem:boundedvec} and Theorem\r\ref{th:main}.
\subsection{Proofs}
\begin{proof}[Proof of Lemma 1]
Assume the set $Y$ is compact and is expressed by the following inequality constraints:
$Y=\{\yb\in\R^d: \hb(\yb)=[h_1(\yb),\ldots,h_m(\yb)]^T\le \boldsymbol 0 \}$, where $h_i:\R^d\to\R$, $i\in[m]$, are convex functions defined on $R^d$.
Then, using Slater's condition for the set $Y$ and continuity of $\boldsymbol T$, conditions analogous to the Karush-Kuhn-Tucker ones can be formulated for $SOL(Y,\boldsymbol T)$ (see Proposition 1.3.4 in \cite{FaccPang1}). Namely, $\yb^*\in SOL(Y,\boldsymbol T)$ if and only if there exists $\boldsymbol{\nu}\in\R^m$ such that
\begin{align}\label{eq:KKT}
 \boldsymbol 0 & = \boldsymbol T(\by^*) + \sum_{i=1}^m(\nu_i, \nabla h_i(\by^*)), \cr
 0&=(\boldsymbol{\nu}, \hb(\yb^*)), \quad \boldsymbol{\nu}\ge\boldsymbol 0, \quad \hb(\yb^*)\le\boldsymbol 0.
\end{align}
Thus, we can associate a multiplier $\boldsymbol {\nu}$ with any solution $\yb^*$ of $VI(Y,\boldsymbol T)$.  We further call the vector $(\yb^*,\boldsymbol{\nu})$ a \emph{KKT tuple}. 

Now consider the associated game $\Gamma_{a}(\Ab\times \R_+^{n})$ defined in \eqref{eq:assocgame1}. 
Note that the variational equilibria in game $\Gamma$ are characterized as $SOL(\EQ,\boldsymbol M)$. Hence, $\ab^*$ is a variational equilibrium in $\Gamma$ if and only if $(\ab^*, \boldsymbol{\nu})$ is a KKT tuple for $VI(\EQ,\boldsymbol M)$. And from Lemma\r3 in \cite{dario2016cdc}, under Assumptions\r\ref{assum:convex} and \ref{assum:Slaters}, $(\ab^*,\boldsymbol{\nu})$ is a KKT tuple for the $VI(\EQ, \Mb)$ if and only if the pair $[\ab^*, \bla^*]$ is a Nash equilibrium of the game $\Gamma_{a}(\Ab\times\R^n_{+})$, where $\bla^*$ denotes the coordinate of the multiplier $\boldsymbol {\nu}$ corresponding to the constraint $\gb(\ab)\le \boldsymbol 0$. Thus, Assertion 1) and 2) are proven. Furthermore, taking into account Propositions 1.3.4 and 1.4.2 in \cite{FaccPang1} such a KKT tuple exists since  $VI(\EQ, \Mb)$ has a solution. It follows that there exists a Nash equilibrium in $\Gamma_{a}(\Ab\times\R^n_{+})$ and Assertion 3) is proven. Finally, Assertion 4) holds since  under Assumptions\r\ref{assum:convex} and \ref{assum:Slaters},  Lemma\r5.1 in \cite{ZhuFrazzoli} shows that $\|  \bla^* \|$ is bounded.

\end{proof}

% \begin{proof}[Proof of Theorem 1]
% Under  Slater's condition in Assumption\r\ref{assum:Slaters} we can use Proposition\r3.1 and Lemma\r5.1 in \cite{ZhuFrazzoli} to conclude that  there exists a Nash equilibrium in $\Gamma_{ab}(\Ab\times\R^n_{\le K+r})$ and that if  $[\ab^*,\bla^*]$ is a Nash equilibrium in $\Gamma_{ab}(\Ab\times\R^n_{\le K+r})$, then $\ab^*$ is a GNE of $\Gamma$. Hence, Assertions 1) and 2) hold.
% Next, Proposition\r1.4.2 in \cite{FaccPang1} allows us to formulate the result for the game $\Gamma_{ab}$ in terms of a variational inequality. In particular, let $\Gamma(N, \{A_i\}, \{J_i\}, C)$ be a game with coupled actions for which Assumption\r\ref{assum:convex} holds.  Then $[\ab^*,\bla^*]$ is a Nash equilibrium in $\Gamma_{ab}(\Ab\times\R^n_{\le K+r})$ if and only if $[\ab^*,\bla^*]\in SOL(\Ab\times \R^n_{\le K+r},\boldsymbol M^0)$ and Assertion 3) follows.
% 
% \end{proof}

\begin{proof}[Proof of Lemma \ref{lem:app}]
The proof is  based on a standard rate estimate, analogous to the result in (5.292) in \cite{stochprogr}.
 We prove the claim by induction. Let us assume that $a_0\ge \frac{\psi}{\kappa-1}$. Then, according to the induction step,
 $$a_{t+1}\le \left(1-\frac{\kappa}{t}\right)\frac{a_0}{t^{c-1}}+\frac{\psi}{t^{c}}.$$
 Thus, it suffices to show that the right-hand-side of the inequality above is not more than $\frac{a_0}{(t+1)^{c-1}}$. As $\frac{\psi}{a_0}-\kappa\le-1$,
 \begin{align*}
  \left(1-\frac{\kappa}{t}\right)\frac{a_0}{t^{c-1}}+\frac{\psi}{t^{c}}=\frac{a_0}{t^{c}}\left(t-\kappa+\frac{\psi}{a_0}\right)&\le\frac{a_0}{t^{c}}(t-1)\cr
  &\le \frac{a_0}{(t+1)^{c-1}},
 \end{align*}
 since $a_0>0$ and $\left(1+\frac{1}{t}\right)^{c-1}\le\left(1+\frac{1}{t-1}\right).$
 The case $a_0\le \frac{\psi}{\kappa-1}$ can be considered analogously.
\end{proof}

\subsection{Supporting Theorems}
Let $\{\zbx(t)\}_t$, $t\in \Z_+$, be a discrete-time Markov process on some state space $E\subseteq \R^d$, namely $\zbx(t)=\zbx(t,\omega):\Z_+\times\Omega\to E$, where $\Omega$ is the sample space of the probability space on which the process $\zbx(t)$ is defined. The transition function of this chain, namely $\Pr\{\zbx(t+1)\in\Gamma| \zbx(t)=\zbx\}$, is denoted by $P(t,\zbx,t+1,\Gamma)$, $\Gamma\subseteq E$.

\begin{definition}\label{def:def1}
The operator $L$ defined on the set of measurable functions $V:\Z_+\times E\to \R$, $\zbx\in E$, by
\begin{align*}
LV(t,\zbx)&=\int{P(t,\zbx,t+1,dy)[V(t+1,y)-V(t,\zbx)]}\cr
&=E[V(t+1,\zbx(t+1))\mid \zbx(t)=\zbx]-V(t,\zbx),
\end{align*}
is called a \emph{generating operator} of a Markov process $\{\zbx(t)\}_t$.
\end{definition}
Next, we formulate the following theorem for discrete-time Markov processes, which is proven in \cite{NH}, Theorem\r2.5.2.

\begin{theorem}\label{th:finiteness}
  Consider a Markov process $\{\zbx(t)\}_t$ and suppose that there exists a function $V(t,\zbx)\ge 0$ such that $\inf_{t\ge0}V(t,\zbx)\to\infty$ as $\|\zbx\|\to\infty$ and
  \[LV(t,\zbx)\le -\alpha(t+1)\psi(t,\zbx) + f(t)(1+V(t,\zbx)),\]
   where $\psi\ge 0$ on $\R \times \R^d$, $f(t)>0$, $\sum_{t=0}^{\infty}f(t)<\infty$. Let $\alpha(t)$ be such that $\alpha(t)>0$, $\sum_{t=0}^{\infty} \alpha(t)= \infty$.
   Then, almost surely $\sup_{t\ge 0}\|\zbx(t,\omega)\| = R(\omega)< \infty$.
\end{theorem}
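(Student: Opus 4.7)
The plan is to reduce Theorem~\ref{th:finiteness} to the classical Robbins--Siegmund non-negative almost supermartingale convergence theorem, and then exploit the uniform radial unboundedness of $V$ to transfer boundedness of the Lyapunov values back to pathwise boundedness of the chain $\zbx(t)$.

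First, I would introduce the adapted process $W_t := V(t, \zbx(t)) \ge 0$ and denote by $\mathcal{F}_t$ the filtration generated by the chain up to time $t$. By the definition of the generating operator $L$ given in Definition~\ref{def:def1}, the hypothesis $LV(t,\zbx) \le -\alpha(t+1)\psi(t,\zbx) + f(t)(1+V(t,\zbx))$ translates, upon substituting $\zbx = \zbx(t)$ and taking the conditional expectation, into
\[ \E[W_{t+1}\mid\mathcal{F}_t] \le (1 + f(t))\,W_t + f(t) - \alpha(t+1)\,\psi(t,\zbx(t)). \]
This is exactly the form required by the Robbins--Siegmund almost supermartingale theorem, with $a_t = f(t) \ge 0$ summable (by assumption $\sum_t f(t) < \infty$), $c_t = f(t) \ge 0$ also summable, and $b_t = \alpha(t+1)\,\psi(t,\zbx(t)) \ge 0$. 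Invoking that classical result, I conclude that $W_t$ converges almost surely to some finite random limit $W_\infty < \infty$, and moreover $\sum_{t} \alpha(t+1)\,\psi(t,\zbx(t)) < \infty$ almost surely. In particular, $\sup_{t \ge 0} W_t < \infty$ with probability one.

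Second, I would leverage the radial unboundedness hypothesis $\inf_{t \ge 0} V(t,\zbx) \to \infty$ as $\|\zbx\| \to \infty$ to translate pathwise boundedness of $W_t$ into pathwise boundedness of $\zbx(t)$. This coercivity means that for every $M > 0$ there exists $K(M)>0$ such that $\|\zbx\| \ge K(M)$ implies $V(t,\zbx) \ge M$ for \emph{all} $t \ge 0$. On the almost-sure event $\{\sup_{t \ge 0} W_t < \infty\}$, suppose for contradiction that $\sup_t \|\zbx(t)\| = \infty$; then along some subsequence $\|\zbx(t_k)\| \to \infty$, whence $W_{t_k} = V(t_k, \zbx(t_k)) \ge \inf_{s \ge 0} V(s, \zbx(t_k)) \to \infty$, contradicting the already-established boundedness of $W_t$. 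Hence $\sup_{t \ge 0}\|\zbx(t,\omega)\| = R(\omega) < \infty$ almost surely.

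The main difficulty is essentially bookkeeping rather than any deep new idea: one must carefully verify the non-negativity, adaptedness, and integrability conditions needed to invoke Robbins--Siegmund, and must respect the direction of the coercivity inequality (the $\inf$ over $t$ is critical, since $V$ also depends on $t$). I note that the assumption $\sum_t \alpha(t) = \infty$ is not actually used to prove the boundedness statement itself; rather, it is an auxiliary hypothesis that is exploited downstream by the callers of Theorem~\ref{th:finiteness} (in combination with the summability $\sum_t \alpha(t+1)\psi(t,\zbx(t)) < \infty$ obtained as a by-product above) to extract convergent subsequences and pin down limit points.
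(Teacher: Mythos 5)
Your proof is correct, and in fact there is no in-paper argument to compare it against: the paper states Theorem~\ref{th:finiteness} only as a quotation of Theorem~2.5.2 in \cite{NH}, so you have supplied the missing argument yourself. Your route --- setting $W_t=V(t,\zbx(t))$, rewriting the drift hypothesis as $\E[W_{t+1}\mid\mathcal F_t]\le (1+f(t))W_t+f(t)-\alpha(t+1)\psi(t,\zbx(t))$, invoking the Robbins--Siegmund lemma (which the paper already records as Theorem~\ref{th:th_nonnegrv}), and then using coercivity of $\inf_{t\ge0}V(t,\cdot)$ to pass from $\sup_t W_t<\infty$ to $\sup_t\|\zbx(t)\|<\infty$ --- is essentially the standard supermartingale argument underlying the cited result, so what your proof buys is self-containedness: boundedness of the iterates follows from the paper's own appendix lemma without appealing to \cite{NH}. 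Two small points worth making explicit: the step from the stated bound on $LV(t,\zbx)$ (a conditioning on the event $\zbx(t)=\zbx$) to the inequality conditional on the full history $\mathcal F_t$ uses the Markov property of $\{\zbx(t)\}$, which you apply implicitly; and Robbins--Siegmund as stated needs only nonnegativity and $\mathcal F_t$-measurability of $z_t,b_t,\xi_t,\zeta_t$, which you verify, so no extra integrability hypotheses are required. Your closing observation is also accurate: $\sum_t\alpha(t)=\infty$ plays no role in the boundedness conclusion itself; it is used downstream (in Lemma~\ref{lem:boundedvec} and the convergence proofs) in combination with the a.s.\ summability $\sum_t\alpha(t+1)\psi(t,\zbx(t))<\infty$ that your argument produces as a by-product.
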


The following is a well-known result of Robbins and Siegmund on non-negative random variables \cite{robbins1985convergence}.
\begin{theorem}\label{th:th_nonnegrv} Let $(\Omega, F, P)$ be a probability space and $F_1\subset F_2\subset\dots$ a sequence of sub-$\sigma$-algebras of $F$.
 Let $z_t, b_t, \xi_t,$ and $\zeta_t$ be non-negative $F_t$-measurable random variables satisfying
 \begin{align*}
  \E(z_{t+1}|F_t)\le z_t(1+b_t)+\xi_t-\zeta_t.
 \end{align*}
Then, almost surely $\lim_{t\to\infty} z_t$ exists and is finite for the case in which $\{\sum_{t=1}^{\infty}b_t<\infty, \;\sum_{t=1}^{\infty}\xi_t<\infty\}$. Moreover, in this case, $\sum_{t=1}^{\infty}\zeta_t<\infty$ almost surely. %\maryam{use $t$ for consistency?}
\end{theorem}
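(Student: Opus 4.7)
(Robbins--Siegmund).}
The plan is to reduce the recursion to a supermartingale on the event of interest and then invoke Doob's almost sure supermartingale convergence theorem, handled carefully since none of $z_t$, $b_t$, $\xi_t$, $\zeta_t$ are assumed to be in $L^1$. First, I would remove the multiplicative term $(1+b_t)$ by rescaling: set $\pi_t = \prod_{k=0}^{t-1}(1+b_k)^{-1}$ and $\hat z_t = \pi_t z_t$, $\hat\xi_t = \pi_{t+1}\xi_t$, $\hat\zeta_t = \pi_{t+1}\zeta_t$. Multiplying the hypothesis by $\pi_{t+1}$ and using $\pi_{t+1}(1+b_t) = \pi_t$ gives
\begin{equation*}
\E[\hat z_{t+1}\mid F_t]\ \le\ \hat z_t + \hat\xi_t - \hat\zeta_t.
\end{equation*}
On the event $E:=\{\sum b_t<\infty,\ \sum\xi_t<\infty\}$, the product $\prod(1+b_k)$ converges to a finite positive limit, so $\pi_t$ is bounded between two positive constants; hence $\sum\hat\xi_t<\infty$ a.s.\ on $E$, and it suffices to prove the two conclusions for $\hat z_t$ and $\hat\zeta_t$ in place of $z_t$ and $\zeta_t$.

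Next, define the auxiliary process
\begin{equation*}
M_t\ =\ \hat z_t\ +\ \sum_{k=0}^{t-1}\hat\zeta_k\ -\ \sum_{k=0}^{t-1}\hat\xi_k.
\end{equation*}
A direct computation using the rescaled inequality shows that $\E[M_{t+1}\mid F_t]\le M_t$, so $M_t$ is an $(F_t)$-supermartingale. Moreover $M_t + \sum_{k<t}\hat\xi_k = \hat z_t + \sum_{k<t}\hat\zeta_k\ge 0$, so on $E$ the process $M_t$ is a.s.\ bounded below by the finite random variable $-\sum_{k=0}^\infty\hat\xi_k$.

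To obtain a.s.\ convergence without assuming integrability, I would localize with the stopping times $T_n = \inf\{t:\ \sum_{k<t}b_k \ge n\ \text{or}\ \sum_{k\le t}\hat\xi_k \ge n\ \text{or}\ \hat z_0\ge n\}$; on $E$ we have $T_n\to\infty$ a.s., so it is enough to prove convergence on each $\{T_n=\infty\}$. The stopped supermartingale $M_{t\wedge T_n}$ satisfies $-n\le M_{t\wedge T_n}$ by the lower bound above (augmented with the bound on $\hat z_0$ at $t=0$), and by optional stopping $\E[M_{t\wedge T_n}^-]$ and $\E[M_{t\wedge T_n}^+]$ are both bounded uniformly in $t$, so Doob's supermartingale convergence theorem yields a.s.\ existence of a finite limit $M_\infty^{(n)}$. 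Sending $n\to\infty$ gives that $M_t$ converges a.s.\ to a finite limit on $E$.

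Finally I would deduce both conclusions from the convergence of $M_t$. On $E$, the sum $\sum_{k<t}\hat\xi_k$ converges to a finite limit; combined with the a.s.\ limit of $M_t$, the non-negative non-decreasing sequence $\hat z_t + \sum_{k<t}\hat\zeta_k$ also converges to a finite limit. Non-negativity and monotonicity then force $\sum_{k=0}^\infty\hat\zeta_k<\infty$ a.s., and hence $\hat z_t$ itself converges to a finite limit; translating back via the bounded factor $\pi_t^{-1}$ yields $\lim_t z_t$ finite a.s.\ and $\sum_t\zeta_t<\infty$ a.s. The principal obstacle is the one addressed by the localization step: the natural lower bound for $M_t$ is a random variable rather than a constant, and the initial value $z_0$ is not assumed integrable, so a direct invocation of supermartingale convergence would fail, and the stopping-time truncation indexed by $n$ is essential.
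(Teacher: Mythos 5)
The paper does not prove this statement at all: Theorem~\ref{th:th_nonnegrv} is quoted as a known result of Robbins and Siegmund with a citation to the original reference, and is used as a black box in the proofs of Theorems~\ref{th:main} and~\ref{th:main2}. Your proposal therefore cannot be compared to an in-paper argument; what you have written is essentially a reconstruction of the standard Robbins--Siegmund proof (rescale by $\pi_t=\prod_{k<t}(1+b_k)^{-1}$ to kill the multiplicative term, form the ``almost supermartingale'' $M_t=\hat z_t+\sum_{k<t}\hat\zeta_k-\sum_{k<t}\hat\xi_k$, localize, apply Doob), and the overall structure is correct. Two small points deserve tightening. First, folding the condition $\hat z_0\ge n$ into the stopping time $T_n$ does not by itself make $M_{t\wedge T_n}$ integrable, since on $\{T_n=0\}$ the stopped process is frozen at $M_0=z_0$, which need not be in $L^1$; the clean fix is to multiply by the $F_0$-measurable indicator $\mathds{1}_{\{z_0<n\}}$ (or work conditionally on $F_0$), after which $\E[M_{t\wedge T_n}\mathds{1}_{\{z_0<n\}}]\le n$ and $M_{t\wedge T_n}\ge -n$ give the uniform $L^1$ bound you need for Doob's theorem. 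Second, the sequence $\hat z_t+\sum_{k<t}\hat\zeta_k$ is not non-decreasing ($\hat z_t$ fluctuates); what you actually use is that this sequence converges (being $M_t+\sum_{k<t}\hat\xi_k$) and dominates the genuinely non-decreasing partial sums $\sum_{k<t}\hat\zeta_k$, which therefore converge, after which $\hat z_t$ converges by subtraction. With those two adjustments the argument is complete and matches the cited source in spirit.
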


%\vspace{-1cm}
%\begin{IEEEbiography}
%[{\includegraphics[width=1in,height=1.25in,clip,keepaspectratio]{tat1.eps}}]{Tatiana Tatarenko}
%received her Dipl.-Math. degree in Mathematics with focus on Statistics and Stochastic Processes from Lomonosov Moscow State University, Russia, in 2011, and her Ph.D. from TU Darmstadt, Germany, in 2017. Currently, she is Postdoctoral Researcher with the Control Methods and Robotics Laboratory at TU Darmstadt, Germany.
%Her main research interests include
% distributed optimization, game theoretic learning, and stochastic
%processes in networked multi-agent systems.
%\end{IEEEbiography}
%
%\vspace{-1.2cm}
%\begin{IEEEbiography}
%[{\includegraphics[width=1in,height=1.25in,clip,keepaspectratio]{maryam.eps}}]{Maryam Kamgarpour}
%received her Master's and Ph.D. in Control Systems from the University of California, Berkeley (2007, 2011) and her B.A.S in Systems Design Engineering from University of Waterloo, Canada (2005). Her research is on safety verification and optimal control of large-scale uncertain dynamical systems with applications in air traffic and power grid systems. She is the recipient of NASA High Potential Individual Award, NASA Excellence in Publication Award (2010) and the European Union (ERC) Starting Grant 2015.
%\end{IEEEbiography}
%%

\end{document}